\newtheorem{Theorem}{Theorem}[section]
\newtheorem{Lemma}[Theorem]{Lemma}
\newtheorem{Proposition}[Theorem]{Proposition}
\theoremstyle{definition}
\newtheorem{definition}[Theorem]{Definition}
\newtheorem{Remark}[Theorem]{Remark}
\newtheorem{Example}[Theorem]{Example}
\def\nfrac#1#2{{\textstyle\frac{#1}{#2}}}
\title{Consensus and Voting on Large Graphs:\\ An Application of Graph Limit Theory}
\begin{document}
\maketitle

\centerline{\scshape Barton E. Lee\footnote{Email: barton.e.lee@gmail.com}}
\medskip
{\footnotesize
 \centerline{ School of Mathematics and Statistics}
   \centerline{The University of New South Wales}
   \centerline{ Sydney NSW 2052, Australia}
} 

\bigskip


\begin{abstract}
Building on recent work by Medvedev (2014) we establish new connections between a basic consensus model, called the voting model, and the theory of graph limits. We show that in the voting model if consensus is attained in the continuum limit then solutions to the finite model will eventually be close to a constant function, and a class of graph limits which guarantee consensus is identified. It is also proven that the dynamics in the continuum limit can be decomposed as a direct sum of dynamics on the connected components, using Janson's definition of connectivity for graph limits. This implies that without loss of generality it may be assumed that the continuum voting model occurs on a connected graph limit.
\end{abstract}


\section{Introduction}\label{introsec}

Given a group of agents voting on whether or not to implement a policy, when and how can the group come to an agreement? This is a consensus problem. Models and algorithms which solve consensus problems are important in the theory of control systems \cite[Section 16.2]{Lun} and economics \cite{Jac1}. In this paper we focus on a particular model of the consensus problem called the \emph{voting model}. This model fits into the framework considered by Medvedev \cite{Med}, who applied graph limit theory to sequences of dynamical systems.

Our key result shows that if the solution to the continuum limit of the voting model reaches consensus then solutions to the finite model on sufficiently large graphs will eventually be close to a constant function. Furthermore, building on our concept of twin-kernels a class of graph limits which guarantee consensus is identified. We also prove that the dynamics in the continuum limit can be decomposed as a direct sum of the dynamics on connected components. This means that without loss of generality we may assume that the dynamics of the continuum model occur on a connected graph limit. These results provide motivation for the continued study of graph limit theory in the consensus protocol literature.

The structure of the remainder of this paper is as follows. In \Cref{s2} we will give a brief introduction to the voting model in the finite setting and \Cref{s3} will review some basic results from the graph limit theory literature. \Cref{s4} extends the model to the continuum setting and \Cref{s6} studies consensus in the continuum model and its relationship with the finite model. \Cref{s6.1} focuses on a special case of the continuum voting model and classifies a class of graph limits which guarantee consensus. The paper concludes with \Cref{srand} which extends the analysis to the context of random graphs.

Some of the results presented in this paper such as \Cref{decomposedynamics} extend to the more general class of nonlinear heat equation initial value problems considered by Medvedev in \cite{Med}. Other results such as \Cref{keytheorem}, \Cref{consisconstant} and \Cref{keytheorem1} rely on a conservation condition (\Cref{conservation}) which holds when an additional condition on the heat equation is satisfied. The remaining results such as those in \Cref{s6.1} rely on specific aspects of the voting model we consider.  However, in any case the methods used throughout this paper may prove useful for researchers interested in initial value problems on large graphs and results about consensus.

\subsection{Limitations and scope}\label{limitation}

The main contribution of this paper is the application of graph limits to approximately solve a consensus problem (\Cref{keytheorem}) and the methods developed within. The practical application, however, is limited by the difficulty involved in finding solutions to the continuum model which attain consensus for interesting graph sequences.  

\Cref{s6.1} absolves this difficulty for a class of graph sequences by showing that consensus is always attained in such cases. Applying our key result then shows that consensus can be approximately guaranteed for large graphs within such sequences. However, a stronger result can be attained since the Laplacian matrix of a connected graph with nonnegative weights will always attain consensus \cite[Theorem 1]{Saber1}. This has meant that our examples illustrating results developed in \Cref{s6.1} may be solved by other methods. It is important to note that despite this drawback our results are not trivialised - the key result applies to any graph sequence which attains consensus in the limit, only the class identified in \Cref{s6.1} is affected.


\section{Finite voting model}\label{s2}


Various voting models have been formulated in the consensus protocol literature \cite{Ald, Dyer}. We consider an elementary form which is also studied in \cite{Med2} and \cite{Sab1}. However, the focus of this paper differs from the existing literature by considering the voter model on sequences of growing graphs; that is, sequences of graphs with vertex sets whose size increases unboundedly. Particular emphasis is placed on approximating the long term behaviour of the voter model on large graphs. \Cref{s4} will extend the voting model to the continuum setting which will assist in determining the long term behaviour of the model on sufficiently large graphs. 

We first begin with some basic definitions from graph theory:

A graph is an ordered pair of sets, say $G=(V,E)$, with $V$ denoting the vertex set and $E$ denoting the edge set of the graph. The elements of $E$ are two element subsets of $V$. This definition implies that the graph is simple; that is, the edge set contains no loops or multiple edges. For more information on graphs we refer the reader to \cite{Die}.

 An \emph{edge-weighted graph} is a graph $H=(V,E)$ together with a sequence $\{\beta_{ij}\}_{i,j\in V}$ of real \emph{edge weights}, such that $\beta_{ij}=\beta_{ji}$ for all $i,j\in V$ and if $\beta_{ij}\neq 0$ then $\{i,\,j\}$ is an edge of $H$. A simple graph is a special case of an edge-weighted graph where edge weights are 0-1 valued with $\beta_{ii}=0$ for all $i\in V$. Without loss of generality we will only consider edge weights which are contained in the interval $[-1,1]$.

 Given an edge-weighted graph $G_n$ on vertex set $[n]:=\{1,\,2,\,\ldots, \, n\}$ with edge weights $\big\{\beta_{ij}^{(n)}\big\}_{i,j\in[n]} $, our voting model arises from the following process. Begin with a set of voters $[n]$, each with initial opinions denoted by $u_i^{(n)}(0)\in \mathbb{R}$ for $i\in [n]$. The influence of voter $i$ on voter $j$ ($i,j\in [n]$) is represented by the edge weight $\beta_{ij}^{(n)}$. At each infinitesimal time step, every voter $i\in [n]$ updates their opinion based on the average of every other voter $j\in [n]$ - scaled according to $\beta_{ij}^{(n)}$. That is, 
\begin{align}
\label{changeinop}
\frac{d\, u_i^{(n)}(t)}{d \,t}=\frac{1}{n}\sum_{j=1}^n\beta_{ij}^{(n)}\Big(u_j^{(n)}(t)-u_i^{(n)}(t)\Big).
\end{align} 

For a given $j\in [n]$, if $\beta_{ij}^{(n)}>0$ then holding all else equal (\ref{changeinop}) implies that voter $i$ adopts an opinion closer to voter $j$'s opinion in the next time step. Whilst, if $\beta_{ij}^{(n)}<0$ then holding all else equal voter $i$'s opinion will diverge from voter $j$'s opinion in the next time step. 

In \Cref{srand} we will extend our analysis to consider the voting model on sequences of random simple graphs. This allows our results to be applied to the voter model on many well-studied random graph processes such as the Watts-Stogatz small world graph.

It should be noted that the voting model process in this paper represents more general systems than simply voters with opinions. In particular, the aforementioned process has been used in autonomous vehicle control systems \cite{Wei}, to model the spread of alcohol abuse \cite{Fren} and neuronal-network activity \cite{Med4}.

We will exclusively consider time to be continuous with $t\in\mathbb{R}^{\ge0}$. Thus the voting process described above can be expressed as an initial value problem (IVP). Vectors and vector-valued functions will be written in bold font whilst components will not.

For any positive integer $n$, let $\boldsymbol{g}^{(n)} \in \mathbb{R}^n$ and let $H_n$ be an edge-weighted graph with vertex set $[n]$ and edge weights $\{\beta_{ij}\}_{i,j\in [n]}$. Then the evolution of voters' opinions is described by the solution $\boldsymbol{u}^{(n)}: \mathbb{R}^{\ge 0}\rightarrow \mathbb{R}^n$ of the IVP
\begin{align}
\label{IVP}
\begin{cases}
\frac{du_i^{(n)}(t)}{dt}&=\frac{1}{n}\sum_{j=1}^n \beta_{ij}^{(n)}\Big(u_j^{(n)}(t)-u_i^{(n)}(t)\Big)\qquad \text{for all }\, i\in [n] \text{ and } t\in \mathbb{R}^{>0},\\
\boldsymbol{u}^{(n)}(0)&=\boldsymbol{g}^{(n)}.
\end{cases}
\end{align}

We now define consensus, which will be the key focus of this paper from \Cref{s6} onwards.

\begin{definition}
\label{consensus}
For any positive integer $n$, let $\boldsymbol{g}^{(n)}\in \mathbb{R}^n$ and let $H_n$ be an edge-weighted graph on vertex set $[n]$. If $\boldsymbol{u}^{(n)}$ is a solution to (\ref{IVP}) such that 
\begin{align}
\label{consensuseq}
\lim_{t\rightarrow \infty}\,\max_{i,j\in [n]}|u_i^{(n)}(t)-u_j^{(n)}(t)|=0,
\end{align}
then we say \em consensus \em is attained by $\boldsymbol{u}^{(n)}$.
\end{definition}

\begin{Example}
\label{completeexample}
For any positive integer $n$, let $H_n$ be an edge-weighted graph on the vertex set $[n]$ such that the edge weights are all equal to $1$. Then $H_n$ will attain consensus for every initial condition vector $\boldsymbol{g}^{(n)}\in \mathbb{R}^n$. Whilst, if the edge weights are all equal to $-1$ then $\boldsymbol{u}^{(n)}$ reaches consensus if and only if $\boldsymbol{g}^{(n)}$ is a constant vector.\hfill $\diamond$
\end{Example}

The IVP (\ref{IVP}) can be expressed as a linear system of differential equations. Let $B^{(n)}$ be an $n\times n$ matrix with $B_{ij}=\beta_{ij}^{(n)}$ for each $i,j\in [n]$ and let\linebreak $\overline{\beta_i}^{(n)}=\sum_{j=1}^n \beta_{ij}^{(n)}$ for each $i\in [n]$. By considering the matrix \linebreak $D^{(n)}=\frac{1}{n}\Big(B^{(n)}-\mathrm{diag}(\overline{\beta_1}^{(n)},\, \overline{\beta_2}^{(n)},\,\ldots,\,\overline{\beta_n}^{(n)} )\Big)$ we can equivalently write (\ref{IVP}) as 
\begin{align}
\label{altIVP}
\begin{cases}
\frac{d\boldsymbol{u}^{(n)}(t)}{dt}&=D^{(n)}\,\boldsymbol{u}^{(n)}(t),\\
\boldsymbol{u}^{(n)}(0)&=\boldsymbol{g}^{(n)}.
\end{cases}
\end{align}
Note that the matrix $D^{(n)}$ is simply the Laplacian matrix of the graph $H_n$ scaled by $-\frac{1}{n}$. The IVP (\ref{altIVP}) has a unique solution
\begin{align}
\label{altIVPsol}
\boldsymbol{u}^{(n)}(t)=e^{D^{(n)}\,t}\,\boldsymbol{g}^{(n)},
\end{align}
so questions of interest such as whether the agents reach a consensus and the time taken to arrive at this consensus are completely determined via the eigenvalues of the (real, symmetric) matrix $D^{(n)}$. In particular, if the eigenvectors associated with the zero eigenvalue are contained within the subspace spanned by $(1, \, 1, \ldots, \, 1)\in \mathbb{R}^n$ and all other eigenvectors of $D^{(n)}$ have negative eigenvalues then consensus will be reached. If all of the eigenvalues are non-positive then further analysis is required to determine whether or not consensus will be reached.

The problem with this approach is that graphs with billions of vertices are becoming increasingly common in both practice and research. However, existing computational methods for calculating eigenvalues and eigenvectors of matrices do not scale well \cite{Kang}. 

The approach we propose considers sequences of the voter model on graphs which grow unboundedly. When time is continuous, this is equivalent to a sequence of IVPs such as (\ref{altIVP}) for $n\in \mathbb{N}$. By studying the sequence of growing graphs we can under certain conditions construct a limiting object (known as the graph limit) which approximately solves the consensus problem on sufficiently large graphs.

To motivate our approach we introduce an example of the consensus problem which will be approximately solved using the results developed within this paper; that is, without the need to calculate eigenvalues of any matrix. In \Cref{s6} we will refer back to this example to illustrates our key result.

\begin{Example}\label{Example1}
Consider a sequence of edge-weighted graphs $G_n$ given by the following process: define the function $W: [0,1]^2\rightarrow \{-1, \, 1\}$
\begin{align*}
W(x,y)&=\begin{cases}
-1 &\text{if $x,y<\frac{1}{3}$}\\
+1 &\text{otherwise.}
\end{cases}
\end{align*}
Then for each $n\in \mathbb{N}$ let $G_n$ be an edge-weighted graph on the vertex set $[n]$ such that the edge-weight between vertices $(i,\, j)\in [n]^2$ is 
$$\beta_{ij}^{(n)}=n^2\,\int_{\frac{i-1}{n}}^{\frac{i}{n}}\int_{\frac{j-1}{n}}^{\frac{j}{n}} W(x,y) \, dx\, dy.$$
This process of constructing a graph sequence from a function such as $W$ will be revisited in \Cref{approxgraphonforivp}. 

\begin{figure}[H]
\centering
  \includegraphics[width=12cm,height=8cm] {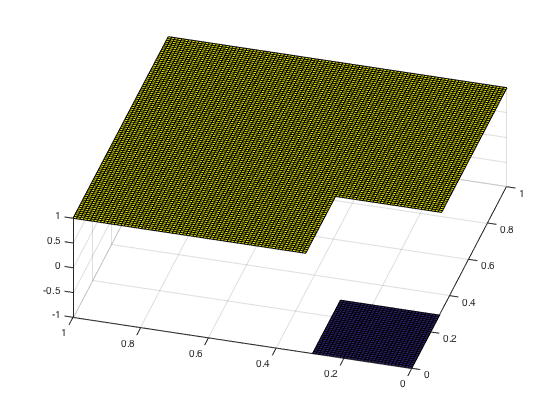}
\caption{A plot of the function $W$.}
\label{fig1}
\end{figure}

\Cref{fig1} contains a plot of the function $W$ and the graphs $G_5$ and $G_6$ are illustrated below. Note that for visual clarity we have omitted the labels for all edge-weights which have value $+1$. 

\begin{figure}[H]
\centering
\begin{subfigure}{0.5\textwidth}
  \centering
  \begin{tikzpicture}[every loop/.style={min distance=8mm,looseness=8}]
  \graph[circular placement,group polar shift=(360/5:0), radius=1.8cm,
         empty nodes, nodes={circle,draw,inner sep=0.2cm}] {
    \foreach \x in {1,2,3,4,5} {
      \foreach \y in {\x,...,5} {
        \x -- \y;
      };
    };

1--[loop above, "-1"]1 ;
2--[loop left, "+1/9"]2;
3--[loop below]3;
4--[loop below]4;
5--[loop right]5;

  };
  \foreach \x [count=\idx from 0] in {$v_1$, $v_2$, $v_3$, $v_4$, $v_5$} {
    \pgfmathparse{90+ \idx * (360 / 5)}
    \node at (\pgfmathresult:1.8cm) {\x};
  };
  
  \node [left] at (-0.85,1.35) {-1/3};

\end{tikzpicture}
\end{subfigure}%
\begin{subfigure}{0.5\textwidth}
  \centering
\begin{tikzpicture}[every loop/.style={min distance=8mm,looseness=8}]
  \graph[circular placement, radius=1.8cm,
         empty nodes, nodes={circle,draw,inner sep=0.2cm}] {
    \foreach \x in {1,2,3,4,5, 6} {
      \foreach \y in {\x,...,6} {
        \x -- \y;
      };
    };

1--[loop above, "-1"]1 ;
2--[loop left, "-1"]2;
3--[loop left]3;
4--[loop below]4;
5--[loop right]5;
6--[loop right]6;

  };
  \foreach \x [count=\idx from 0] in {$v_1$, $v_2$, $v_3$, $v_4$, $v_5$, $v_6$} {
    \pgfmathparse{90 + \idx * (360 / 6)}
    \node at (\pgfmathresult:1.8cm) {\x};
  };
  
  \node [left] at (-0.65,1.55) {-1};

\end{tikzpicture}
\end{subfigure}
\end{figure}

Now consider a sequence of voter models on the graphs $\{G_{n}\}_{n=1}^\infty$ with some sequence of initial condition vectors $\boldsymbol{g}^{(n)}\in \mathbb{R}^{n}$. Then the unique solution is determined by (\ref{altIVPsol}) and consensus is determined by the eigenvalues of the matrix $D^{(n)}$. The matrices $D^{(n)}$ for $n=5$ and $6$ are computed below.

\begin{align*}
D^{(5)}&=\frac{1}{5} \begin{bmatrix}
-2/3 & -1/3  & 1      &1&1     \\
-1/3 & -8/3& 1& 1 & 1    \\
1 &1 & -4 &1& 1 \\
1   & 1 & 1 &-4 &1  \\
1  & 1&1&1& -4
\end{bmatrix}\\
&\\
D^{(6)}&=\frac{1}{6} \begin{bmatrix}
-3 & -1  & 1      &1&1   &1  \\
-1 & -3& 1& 1 & 1 &1   \\
1 &1 & -5&1& 1 &1\\
1   & 1 & 1 &-5 &1  &1\\
1   & 1 & 1 &1 &-5 &1\\
1  & 1&1&1& 1&-5
\end{bmatrix}.
\end{align*}

 Computing the eigenvalues for the matrix $D^{(n)}$ and hence solving the consensus probelem will become infeasible as $n$ grows sufficiently large.\hfill $\diamond$

\end{Example}

This paper we will develop a method to approximately solve the consensus problem on large graphs, which applies to the graph sequence considered in the above example.


\section{Graph limit theory}\label{s3}


 The theory of graph limits was introduced by Lov\'asz and Szegedy in 2006 \cite{Lov1} and then further developed in a series of papers by Borgs et al. \cite{Borgs11, Borgs1}. A key goal of Lov\'asz and Szegedy was to understand large graph structures by characterising convergence for sequences of graphs which grow unboundedly, thereby constructing a natural `limit object'. 

In \Cref{s3.1} we present some basic definitions and results from the theory of graph limits, and describe a canonical example of a convergent graph sequence. In \Cref{s3.3} we review the appropriate notion of connectivity for graph limits. For an overview of the theory and its applications we refer the interested reader to the excellent monograph by Lov\'asz \cite{Lov}.


\subsection{Preliminaries}\label{s3.1}


The theory of graph limits is based on the graph-theoretic notion of a homomorphism density. This notion is then used to define convergence of graph sequences. Alternatively, we will focus on an equivalent definition formulated using a metric which we review in this section. 

Throughout this paper we will use the term measurable, this will always refer to the Lebesgue measure with Borel $\sigma$-algebra. It is also important to note that a number of the definitions and results reviewed in this section have been stated without an additional necessary requirement of \emph{uniformly bounded edge weights} since this is automatically satisfied by our assumption that edge weights are contained in $[-1,1]$. Lastly, since this paper is only concerned with edge-weighted graphs rather than both vertex and edge weighted graphs, we will restrict some definitions to this special case without explicit warning. In what follows the phrase `weighted graph' will always refer to an edge-weighted graph.

Let $H$ be a weighted graph, the \emph{pixel kernel of $H$} is a measurable function from $[0,1]^2$ to $[-1,1]$ which represents the graph $H$ and is denoted by $W_H$. The construction is as follows: let $H=(V,E)$ be a weighted graph with edge weights $\{\beta_{ij}\}_{i,\,j\in V}$. Partition $[0,1]$ into $|V|$ measurable subsets of equal measure, say $\{I_i\}_{i\in V}$. Then define the pixel kernel of $H$:
\begin{align}\label{pixel}
W_H(x,y)&=\beta_{ij}  \qquad \text{   if } (x,y)\in I_i\times I_j \text{ for some $i,j\in V$}.
\end{align}
This construction is not unique, however given a graph, the set of pixel kernels arising via (\ref{pixel}) forms an equivalence class under the weakly isomorphic relation (which we do not define here).

We now define kernels which are the limit objects of graph sequences.
\begin{definition}
A kernel is symmetric, measurable function $W:[0,1]^2\rightarrow[-1,1]$. In the special case that $W: [0,1]^2\rightarrow [0,1] $ we call $W$ a \emph{graphon}. We denote the set of all kernels and graphons by $\mathcal{W}_1$ and $\mathcal{W}_0$, respectively.
\end{definition}

Informally a kernel (or graphon) can be thought of as a generalisation of the adjacency matrix of a weighted graph which has a continuum number of vertices. 

\begin{definition}
A kernel which is also a step function is called a \emph{step-kernel}.
\end{definition}

Note that for every weighted graph $H$ the pixel kernel of $H$ (\ref{pixel}) is a step kernel.

In the theory of graph limits convergence of graph sequences can be defined via the \emph{cut-distance} metric, $\delta_\Box(\cdot\, , \, \cdot)$. However, for the purposes of this paper a stronger form of convergence - convergence in $L_2$-norm suffices (refer to \cite[Equation 8.14]{Lov} and \cite[Theorem 2.5]{Borgs11}).

\begin{definition}
Let $\{H_n\}_{n\in \mathbb{N}}$ be a sequence of weighted graphs and let $W\in \mathcal{W}_1$ be a kernel. We say that the graph sequence $\{H_n\}_{n\in \mathbb{N}}$ converges to the kernel $W$ if the sequence of pixel kernels $\{W_{H_n}\}_{n\in \mathbb{N}}$ converges to $W$ in $L_2$-norm.
\end{definition} 

We now present a canonical example of a graph sequence which converges to a given kernel. This construction was utilised by \cite{Med} and will be referred to throughout this paper.

\begin{Example} \cite[Section 5]{Med}
\label{approxgraphonforivp}
Given $W\in \mathcal{W}_1$, for every positive integer $n$ define the partition $\mathcal{P}_n$ of $[0,1]$ by
\begin{align*}
\mathcal{P}_n=\Bigg\{I_i^n: I_i^n=\bigg(\frac{i-1}{n}, \, \frac{i}{n}\bigg]\,\text{ for } i\in [n]\Bigg\}.
\end{align*} 
For completeness we can redefine $I_1^n=\big[0, \frac{1}{n}\big]$, however, it makes no difference for the results that follow. We now construct the sequence of weighted graphs $H_n$ on vertex set $[n]$ with edge weights 
\begin{align*}
\beta_{ij}^n&=n^2 \int_{I_i^n\times I_j^n} W(x,y)\,dx\,dy \qquad\text{for all $i,j\in [n].$}
\end{align*}
Medvedev \cite{Med} proved that $W_{H_n}$ converges to $W$ in $L_2$-norm and hence the graph sequence $\{H_n\}_{n\in \mathbb{N}}$ converges to the kernel $W$.\hfill $\diamond$
\end{Example}


\subsection{Connectivity in graph limit}\label{s3.3}


We will make use of the notion of connectivity for graph limits, introduced by Janson \cite{Jan1}, which we review here. In fact it will be shown in \Cref{degenexample} that connectivity of the graph limit, or kernel, is a necessary condition for consensus when considering arbitrary initial condition functions in the continuum voter model (to be defined in \Cref{s4}). 

\begin{definition} {\cite[Definition 1.12]{Jan1}}
\label{connectedgraphon}\\
A kernel $W\in \mathcal{W}_1$ is \em connected \em if for every measurable subset $S\subseteq [0,1]$ with $0<\lambda (S)<1$, we have
\begin{align*}
\int_{S\times \bigl([0,1]\backslash S\bigr)} |W(x,y)|\,dx\,dy> 0.
\end{align*}
Here $\lambda(S)$ denotes the Lebesgue measure of the set $S\subseteq [0,1]$.
\end{definition}

Janson proved that every kernel can be decomposed as a direct sum of connected kernels. To describe this decomposition we need some definitions. Given an interval $J=[M,N]\subseteq [0,1]$, we define the nonnegative linear function from $J$ to $[0,1]$ as follows:
\begin{align}
\phi: J&\rightarrow [0,1]\notag\\\label{affinemap}
x&\mapsto \frac{x-M}{M-N}=\frac{x-M}{\lambda(J)}.
\end{align}

\begin{definition}
\label{directsum}
Let $\{W_i\}_{i\in I}\subseteq \mathcal{W}_1$ be a countable family of kernels and let $\{a_i\}_{i\in I}$ be a countable family of real postive numbers such that $\sum_{j\in I} a_i=1$. Then the direct sum of $\{W_i\}_{i\in I}$ with weights $\{a_i\}_{i\in I}$ is the kernel denoted by
\begin{align}
\label{directsumeq}
\bigoplus_{i\in I} a_iW_i\in \mathcal{W}_1.
\end{align}
We interpret (\ref{directsumeq}) by partitioning $[0,1]$ into intervals $J_i$ of Lebesgue measure $a_i$ for $i\in I$ and denoting the nonnegative linear function from $J_i$ to $[0,1]$ by $\phi_i$ (see (\ref{affinemap})). Then 
\begin{align}
\label{directsumexplicit}
\Big(\bigoplus_{i\in I} a_iW_i \Big)(x,y)=
\begin{cases}
W_k\bigr(\phi_k(x),\phi_k(y)\bigl) &\text{ if } x,y\in J_k \text{ for some $k\in \mathbb{Z}^+$,}\\
0 & \text{ otherwise}.
\end{cases}
\end{align}
\end{definition}

\begin{Lemma} \emph{\cite[Theorem 1.5]{Jan1}}
\label{decompker}\\
Let $W\in \mathcal{W}_1$. Then there exist a countable family of connected kernels, \linebreak$\{W_i\}_{i\in I}\subseteq \mathcal{W}_1$, and a corresponding family of positive real numbers $\{a_i\}_{i\in I}$ with $\sum_{i\in I} a_i=1$ such that 
\begin{align*}
W=\bigoplus_{i\in I} a_iW_i.
\end{align*}
The connected kernels $\{W_i\}_{i\in I}$ will be referred to as the connected components of $W$.
\end{Lemma}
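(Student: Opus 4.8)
The plan is to recover the connected components as the \emph{atoms} of a naturally associated sub-$\sigma$-algebra. Call a measurable set $A\subseteq[0,1]$ \emph{invariant} if $\int_{A\times([0,1]\setminus A)}|W(x,y)|\,dx\,dy=0$; these are precisely the cuts across which $W$ carries no weight, so by \Cref{connectedgraphon} the kernel $W$ is connected exactly when no invariant set has measure strictly between $0$ and $1$. First I would check, modulo null sets, that the invariant sets form a sub-$\sigma$-algebra $\mathcal{A}$ of the Lebesgue measurable subsets of $[0,1]$. It contains $\emptyset$ and $[0,1]$; it is closed under complementation because $W$ is symmetric; and it is closed under countable unions, since for invariant $A_n$ with $A=\bigcup_n A_n$ one has the pointwise containment $A\times A^{c}\subseteq\bigcup_n\bigl(A_n\times A_n^{c}\bigr)$, whence $\int_{A\times A^{c}}|W|\le\sum_n\int_{A_n\times A_n^{c}}|W|=0$.

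Next I would extract the atoms of $\mathcal{A}$. Distinct atoms are disjoint up to null sets and each has positive Lebesgue measure, so only finitely many can have measure exceeding $1/k$ for any $k$; hence there are at most countably many atoms $\{J_i\}_{i\in I}$, and I set $a_i:=\lambda(J_i)$. For each $i$ I pull $W$ back along the affine map $\phi_i\colon J_i\to[0,1]$ of \eqref{affinemap}, defining $W_i(u,v):=W\bigl(\phi_i^{-1}(u),\phi_i^{-1}(v)\bigr)$. Because $J_i$ is an atom, $W_i$ admits no invariant subset of intermediate measure, so $W_i$ is connected. Invariance of each $J_i$ forces $W=0$ almost everywhere off $\bigcup_i\bigl(J_i\times J_i\bigr)$, which matches the off-diagonal zeros in \eqref{directsumexplicit}; after relabelling the atoms as consecutive subintervals this realises $W=\bigoplus_{i\in I}a_iW_i$ in the sense of \Cref{directsum}.

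The main obstacle is the \emph{non-atomic part} $N$ of $\mathcal{A}$, i.e. the possibility that $\sum_{i}a_i<1$. Here I would argue that $W$ vanishes a.e.\ on $N$: since $\mathcal{A}$ restricted to $N$ is non-atomic, for every $m$ I can split $N$ into invariant pieces $N_1,\dots,N_{k_m}$ each of measure at most $1/m$, and invariance makes $W$ block-diagonal along this partition, so the support of $W|_{N\times N}$ is contained up to null sets in $\bigcup_j\bigl(N_j\times N_j\bigr)$, a set of measure $\sum_j\lambda(N_j)^2\le(1/m)\lambda(N)\to0$. Letting $m\to\infty$ gives $W=0$ a.e.\ on $N\times N$, and invariance already gives $W=0$ a.e.\ on $N\times N^{c}$. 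Reconciling this vanishing set of positive Lebesgue measure with the requirement that \emph{every} listed component be connected and that the weights sum to exactly $1$ is the delicate point; this is precisely where Janson's full argument is needed, whereas in the generic situation $\lambda(N)=0$ the atoms alone already furnish the asserted decomposition.
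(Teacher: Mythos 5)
The paper never proves this lemma --- it is quoted directly from Janson \cite[Theorem 1.5]{Jan1} --- so your write-up has to stand on its own as a proof, and it does not quite do so: the gap you flag at the end is real and, more importantly, it cannot be closed by any argument. First, the positive part: your atomic analysis is sound. The invariant sets do form a sub-$\sigma$-algebra (the inclusion $A\times A^{c}\subseteq\bigcup_n\bigl(A_n\times A_n^{c}\bigr)$ is correct, and closure under complements follows from the symmetry of $W$), and the restriction of $W$ to an atom $J_i$ is connected; you should, however, make the key verification explicit: if $B\subseteq J_i$ were a disconnecting set for $W_i$, then $\int_{B\times([0,1]\setminus B)}|W|\le\int_{B\times(J_i\setminus B)}|W|+\int_{J_i\times([0,1]\setminus J_i)}|W|=0$, so $B$ would be an invariant set of intermediate measure inside the atom $J_i$, a contradiction. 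A further caveat: atoms need not be intervals, so the equality $W=\bigoplus_i a_iW_i$ in the sense of \eqref{directsumexplicit} can only hold after a measure-preserving rearrangement of $[0,1]$; your ``relabelling the atoms as consecutive subintervals'' silently changes the kernel, and the conclusion (like the citation it reproduces) must be read as equality up to such a rearrangement.

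The genuine gap is the non-atomic part $N$, and deferring it to ``Janson's full argument'' cannot repair it, because when $\lambda(N)>0$ the statement you are trying to prove is false as written. You correctly show $W=0$ a.e.\ on $N\times[0,1]$. But under \Cref{connectedgraphon} the zero kernel is \emph{disconnected} (every cut integral vanishes), so any component whose interval meets $N$ in positive measure is a zero kernel and hence not connected. Concretely, $W\equiv 0$ admits no decomposition into connected kernels with positive weights: every $W_i$ would be the zero kernel; the same failure occurs for the kernel equal to $1$ on $[0,1/2]^2$ and $0$ elsewhere, where the mass of $[1/2,1]$ must be covered by components each of which has a vanishing cut. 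What your argument actually establishes is the correct form of the theorem: $[0,1]$ splits into countably many atoms carrying connected components, plus a leftover set $N$ on which and from which $W$ vanishes (the continuum analogue of isolated vertices), and the decomposition asserted in \Cref{decompker} holds exactly when $\lambda(N)=0$. So the ``delicate point'' you identified is not a technicality hidden in \cite{Jan1}; it is a degenerate case that the lemma, as reproduced in this paper with its own definition of connectivity, does not survive, and any faithful version of Janson's theorem must set that part aside explicitly. Your proof is complete precisely on the class of kernels for which the quoted statement is true.
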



\section{Voting on large graphs and graph limits}\label{s4}


Extending the voting model to kernels is motivated by a number of observations. Firstly, it is a more general setting to study the voting model which includes the finite voting model considered in \Cref{s2} as a special case. Secondly, many modern-day networks such as the internet are for all practical purposes infinitely large. Thus it may be more appropriate to consider the voting model in the setting of graph limits. Thirdly, the behaviour of solutions to the finite voting model are completely determined via eigenvalues and eigenvectors (recall (\ref{altIVPsol})). However, for sufficiently large graphs it is computationally infeasible to compute these values and vectors. Approximating a large graph by a kernel provides an alternative method for approximately solving the dynamics of the IVP (\ref{IVP}) on large graphs.

 We begin by presenting the voting model on a kernel. Let $W\in \mathcal{W}_1$ and \linebreak $\boldsymbol{g}\in L_\infty([0,1])$. Then the continuum limit of (\ref{IVP}) can be expressed by\linebreak $\boldsymbol{u}: \mathbb{R}^{\ge 0}\rightarrow L_\infty([0,1])$ such that
\begin{align}
\label{PDE}
\begin{cases}
\frac{\partial u(x,t)}{\partial t}& = \int_0^1 W(x,y)\Big(u(y,t)-u(x,t)\Big)dy \qquad \text{for all } x\in [0,1] \text{ and }  t\in \mathbb{R}^{>0},\\
\boldsymbol{u}(0)&=\boldsymbol{g}.
\end{cases}
\end{align}

Here, and throughout this paper, the integral sign refers to Lebesgue integration. Replacing IVPs such as (\ref{IVP}) with the continuum limit has attracted interest in a number of papers \cite{Fren, Ome, Stro}. However as pointed out by Medvedev \cite{Med}, ``a rigorous justification for taking the continuum limit in (such models) was lacking". In light of this, Medvedev \cite{Med} showed that the theory of graph limits could be used to prove that the continuum limit of such dynamical systems can approximate the dynamics on large finite graphs, under certain conditions.

In \Cref{s6} we will show how the continuum IVP (\ref{PDE}) can be used to approximate the consensus problem of the finite voter model on large graphs. In particular, we will revisit \Cref{Example1} to illustrate our results.

First we present an existence and uniqueness result which shows that the continuum limit IVP (\ref{PDE}) is well-posed; allowing the voting model to be extended to the continuum setting. The result considers the space of continuously differentiable vector-valued functions from $\mathbb{R}$ to $L_\infty([0,1])$, denoted by $C^1(\mathbb{R}, L_\infty([0,1]))$. This space is equipped with the following norm: let $\boldsymbol{u}\in C^1(\mathbb{R}, L_\infty([0,1])) $ then
\begin{align}\label{continuousnorm}
\|\boldsymbol{u}\|_{C(\mathbb{R}, L_\infty([0,1]))}&=\sup_{0\le t<\infty} \|\boldsymbol{u}(t)\|_\infty= \sup_{0\le t<\infty} \underset{x\in [0,1]}{\mathrm{ess\ sup} }\,|u(x,t)|,
\end{align}
where $\mathrm{ess\ sup}$ denotes the essential supremum. The theorem follows as a special case of \cite[Theorem 3.2]{Med}. 

\begin{Theorem}\label{existanduniq} \emph{(Existence and Uniqueness)}\\
Let $W\in \mathcal{W}_1$ and $\boldsymbol{g}\in L_\infty\big([0,1]\big)$. Then the IVP (\ref{PDE}) has a unique solution in $C^1\Big(\mathbb{R}, L_\infty\big([0,1]\big)\Big)$.
\end{Theorem}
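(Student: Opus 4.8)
The plan is to recognise the right-hand side of (\ref{PDE}) as the action of a \emph{bounded} linear operator on the Banach space $L_\infty([0,1])$, thereby recasting (\ref{PDE}) as a linear autonomous ODE in a Banach space whose standard theory yields a unique global $C^1$ solution. Concretely, I would define $A: L_\infty([0,1]) \to L_\infty([0,1])$ by
$$(A\boldsymbol{v})(x) = \int_0^1 W(x,y)\,v(y)\,dy - v(x)\int_0^1 W(x,y)\,dy,$$
splitting the integrand into an averaging (integral) operator and a multiplication operator by the degree function $x \mapsto \int_0^1 W(x,y)\,dy$. The IVP (\ref{PDE}) then reads $\frac{d\boldsymbol{u}}{dt} = A\boldsymbol{u}$ with $\boldsymbol{u}(0) = \boldsymbol{g}$.

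The first substantive step is to verify that $A$ is a well-defined bounded operator. Measurability of $x \mapsto \int_0^1 W(x,y)\,v(y)\,dy$ and of the degree function follows from Fubini's theorem, using that $W$ is measurable and $\boldsymbol{v} \in L_\infty([0,1]) \subseteq L_1([0,1])$. For the norm bound, since $|W| \le 1$ and the domain $[0,1]$ has unit measure, one gets $\bigl|\int_0^1 W(x,y)\,v(y)\,dy\bigr| \le \|\boldsymbol{v}\|_\infty$ and $\bigl|\int_0^1 W(x,y)\,dy\bigr| \le 1$, so that $\|A\boldsymbol{v}\|_\infty \le 2\|\boldsymbol{v}\|_\infty$; hence $A$ is bounded with $\|A\| \le 2$. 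Note that only the uniform bound $|W| \le 1$ guaranteed by $W \in \mathcal{W}_1$ is used here, not the symmetry of $W$.

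With $A$ bounded, I would invoke the theory of linear ODEs in Banach spaces. The operator exponential $e^{tA} = \sum_{k \ge 0} t^k A^k / k!$ converges in operator norm (dominated by $e^{|t|\,\|A\|}$), and $\boldsymbol{u}(t) = e^{tA}\boldsymbol{g}$ defines a map in $C^1(\mathbb{R}, L_\infty([0,1]))$ with $\frac{d}{dt} e^{tA}\boldsymbol{g} = A e^{tA}\boldsymbol{g}$, solving the IVP. Uniqueness would follow from a Gronwall argument: the difference $\boldsymbol{w}$ of two solutions satisfies $\boldsymbol{w}(t) = \int_0^t A\boldsymbol{w}(s)\,ds$, whence $\|\boldsymbol{w}(t)\|_\infty \le \|A\| \int_0^t \|\boldsymbol{w}(s)\|_\infty\,ds$ forces $\boldsymbol{w} \equiv 0$. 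Alternatively, since the statement is advertised as a special case of \cite[Theorem 3.2]{Med}, one may instead simply verify the hypotheses of that theorem --- here the relevant nonlinearity degenerates to the globally Lipschitz linear map $A$ --- and quote it directly.

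The main obstacle is confined to the boundedness and measurability check for $A$; everything downstream is routine once linearity and boundedness are secured. In particular, because $A$ is globally (not merely locally) Lipschitz, there is no finite-time blow-up to exclude and the solution is automatically global on all of $\mathbb{R}$, which is precisely why the stronger $C^1(\mathbb{R}, L_\infty([0,1]))$ conclusion --- rather than only local existence --- is obtained at no extra cost.
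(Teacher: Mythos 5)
Your proof is correct, but it is genuinely different from what the paper does: the paper gives no self-contained argument at all, instead disposing of the theorem in one line by declaring it a special case of \cite[Theorem 3.2]{Med}, where Medvedev proves well-posedness for the general \emph{nonlinear} heat equation (with a Lipschitz nonlinearity applied to $u(y,t)-u(x,t)$) via a Picard/contraction-mapping argument. You instead exploit the linearity of the voting model to recast (\ref{PDE}) as $\boldsymbol{u}'=A\boldsymbol{u}$ for a bounded linear operator $A$ on $L_\infty([0,1])$ with $\|A\|\le 2$, and then solve by the operator exponential with a Gronwall argument for uniqueness; your measurability and boundedness checks for $A$ are the right ones (and you correctly observe that only $|W|\le 1$ is needed, not symmetry). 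What your route buys: a self-contained, elementary proof, an explicit solution formula $\boldsymbol{u}(t)=e^{tA}\boldsymbol{g}$ that mirrors the finite-dimensional solution (\ref{altIVPsol}), and immediate global existence (indeed analyticity in $t$), so the $C^1(\mathbb{R},L_\infty([0,1]))$ conclusion is automatic. What the paper's route buys: generality and consistency --- Medvedev's theorem covers the wider class of nonlinear models into which the paper repeatedly embeds the voting model, so citing it keeps the voting model visibly a special case of that framework, at the cost of relying on an external fixed-point argument. Your closing remark that one could alternatively verify the hypotheses of Medvedev's theorem and quote it is exactly what the paper does, so your proposal in fact subsumes the paper's approach as its fallback.
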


For the remainder of this paper when we refer to `the' solution of the IVP (\ref{PDE}) this will always mean the unique solution in $C^1(\mathbb{R}, L_\infty([0,1]))$ described above. We now introduce a continuum analog of \Cref{consensus}.

\begin{definition}\label{condef} Let $A\subseteq [0,1]$ of nonzero measure. We say that \em consensus on $A$ \em is attained if 
\begin{align}
\label{consensuseq1}
\lim_{t\rightarrow \infty}\, \underset{(x,y)\in A^2}{\mathrm{ess\ sup} } \,|u(x,t)-u(y,t)|=0.
\end{align}
If $A=[0,1]$ then we say \em consensus \em is attained.
\end{definition}

\begin{Example}
The constant $1$-valued graphon attains consensus for every \linebreak$\boldsymbol{g}\in L_\infty([0,1])$, while, the constant $-1$-valued kernel reaches consensus if and only if $\boldsymbol{g}$ is a constant valued function.\hfill $\diamond$
\end{Example}

By applying Medvedev's convergence result \cite[Theorem 5.2]{Med} we justify the use of the continuum limit (\ref{PDE}). We show that under certain conditions the continuum limit approximates the solutions to the voting model on sufficiently large graphs. First consider an IVP in the form of (\ref{PDE}) with kernel $W$ and initial condition function $\boldsymbol{g}\in L_\infty([0,1])$. We define a sequence of IVPs with the $n$-th IVP corresponding to the voting model on the graph $H_n$ (as defined in \Cref{approxgraphonforivp}) and initial condition $\boldsymbol{g}_n$ defined below:
\begin{align}
\label{gsubn}
g_n(x)=\int_{\frac{i-1}{n}}^{\frac{i}{n}} g(y)\,dy \qquad\text{for each $i\in[n]$ and for all $x\in I_i^n$},
\end{align}
where $I_i^n=\Big(\frac{i-1}{n},\frac{i}{n}\Big]$. This can naturally be interpreted as a vector in $\mathbb{R}^n$, say $\boldsymbol{g}^{(n)}$ for each $n$. Where the $i$-th component of the vector $\boldsymbol{g}^n$ is the value of the function $\boldsymbol{g}_n$ on $I_i^n$.

Thus, the $n$-th approximate IVP is
\begin{align}
\label{IVPapprox}
\begin{cases}
\frac{du_i^n(t)}{dt}&=\frac{1}{n}\sum_{j=1}^n  \beta^n_{ij}\big(u_j^n(t)-u_i^n(t)\big)\qquad \text{for all }\, i\in [n] \text{ and } t\in \mathbb{R}^{>0},\\
\boldsymbol{u}^n(0)&=\boldsymbol{g}^{(n)}.
\end{cases}
\end{align}

 This is equivalent to the continuum IVP with step kernel $W_n=W_{H_n}$ (see (\ref{pixel})) and step function $\boldsymbol{g}_n$ (described in (\ref{gsubn}))
\begin{align}
\label{approxPDE}
\begin{cases}
\frac{\partial u_n(x,t)}{\partial t}&=\int_0^1 W_n(x,y)\big(u_n(y,t)-u_n(x,t)\big)\,dy\qquad \text{for all } x\in [0,1] \text{ and }  t\in \mathbb{R}^{>0},\\
\boldsymbol{u}_n(x,0)&=\boldsymbol{g}_n(x),
\end{cases}
\end{align}
in the sense that $u_n(x,t)=u_i^n(t)$ for all $x\in I_i^n$.

In a similar manner to (\ref{continuousnorm}), we define the $C([0,T]; L_2([0,1]))$-norm as follows: let $\boldsymbol{u}\in C([0,T]; L_2([0,1]))$ then
\begin{align*}
\|\boldsymbol{u}\|_{C([0,T];L_2([0,1]))}=\max_{t\in [0,T]}\|\boldsymbol{u}(t)\|_{2}.
\end{align*} 
We now present a key result, which follows as an application of \cite[Theorem 5.2]{Med}.

\begin{Theorem} \label{approx} 
Let $W\in \mathcal{W}_1$ and let $\boldsymbol{g}\in L_\infty([0,1])$. If $\{\boldsymbol{u}_n\}_{n\in \mathbb{N}}$ is a sequence of solutions to IVPs of the form (\ref{PDE}) with initial conditions functions \linebreak$\{\boldsymbol{g_n}\}_{n\in \mathbb{N}}\subseteq L_\infty([0,1])$ and kernels $\{W_n\}_{n\in \mathbb{N}}\subseteq \mathcal{W}_1$ and $\boldsymbol{u}$ is a solution to the IVP (\ref{PDE}) with initial condition $\boldsymbol{g}$ and kernel $W$, then for any $0<T<\infty$
\begin{align}\label{med1}
\|\boldsymbol{u}-\boldsymbol{u}_n\|_{C([0,T];L_2([0,1]))}&\le\Bigg(\|\boldsymbol{g}-\boldsymbol{g}_n\|_2^2 +\frac{C_1\|W-W_n\|_2}{C_2}\Bigg)\mathrm{exp}(C_2 \,T),
\end{align}
where $C_1,\, C_2$ are positive constants independent of $n$. Furthermore, if each $W_n$ and $\boldsymbol{g}_n$ is given by the construction in \Cref{approxgraphonforivp} and (\ref{gsubn}) then for any $0<T<\infty$
\begin{align*}
\|\boldsymbol{u}-\boldsymbol{u}_n\|_{C([0,T];L_2([0,1]))}&\rightarrow 0\qquad\text{as $n\rightarrow \infty$.}
\end{align*}
\end{Theorem}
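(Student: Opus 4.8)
The plan is to obtain both assertions as direct specialisations of \cite[Theorem 5.2]{Med}, which establishes a Gronwall-type error estimate for discretisations of nonlinear heat equations on graphons. First I would cast the continuum voting IVP (\ref{PDE}) into Medvedev's framework by writing its right-hand side as the operator
\begin{align*}
(F_W\boldsymbol{u})(x) = \int_0^1 W(x,y)\big(u(y) - u(x)\big)\,dy,
\end{align*}
and identifying the coupling function as $D(a,b) = b - a$, which is the linear special case of the (locally Lipschitz) nonlinearities allowed in \cite{Med}. With this identification the pair $(W,\boldsymbol{g})$ produces the solution $\boldsymbol{u}$ and each pair $(W_n,\boldsymbol{g}_n)$ produces $\boldsymbol{u}_n$, so the estimate (\ref{med1}) is exactly the conclusion of \cite[Theorem 5.2]{Med} applied to these data.

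Second, I would verify the hypotheses needed to invoke that theorem with constants independent of $n$. Because the coupling $D(a,b) = b - a$ is linear, the operator $F_W$ is a bounded linear map on both $L_2([0,1])$ and $L_\infty([0,1])$, with operator norm controlled by the uniform bound $|W|\le 1$ guaranteed by our standing assumption that kernels take values in $[-1,1]$; the same applies to every $F_{W_n}$. Consequently the Lipschitz constant governing Medvedev's Gronwall-type estimate, and therefore the constants $C_1$ and $C_2$, can be chosen to depend only on this uniform kernel bound and not on $n$. The finiteness of the time horizon $T$ is what allows the $\exp(C_2\,T)$ factor to absorb any (possibly exponential) growth of the solutions, so no global-in-time bound on $\boldsymbol{u}_n$ is required. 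This delivers (\ref{med1}).

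Third, for the convergence statement I would show that both terms on the right of (\ref{med1}) vanish as $n\rightarrow\infty$ when $W_n$ and $\boldsymbol{g}_n$ arise from the constructions in \Cref{approxgraphonforivp} and (\ref{gsubn}). The term $\|W - W_n\|_2 \rightarrow 0$ is precisely the $L_2$-convergence recorded in \Cref{approxgraphonforivp}. For $\|\boldsymbol{g} - \boldsymbol{g}_n\|_2 \rightarrow 0$ I would note that $\boldsymbol{g}_n$ is the block-average (conditional expectation) of $g$ over the refining partitions $\mathcal{P}_n$, whose generated $\sigma$-algebras increase to the Borel $\sigma$-algebra; hence $\boldsymbol{g}_n \rightarrow \boldsymbol{g}$ in $L_2$ by the martingale convergence theorem, or equivalently by approximating $g\in L_\infty\subseteq L_2$ by continuous functions. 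Since the bracketed factor in (\ref{med1}) then tends to $0$ while $\exp(C_2\,T)$ is fixed for each $T$, the claimed convergence follows.

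The main obstacle I anticipate is not any single estimate but the bookkeeping needed to confirm that the voting model genuinely fits the structural assumptions of \cite[Theorem 5.2]{Med} and to track that its constants are uniform in $n$. Once the operators $F_{W_n}$ are seen to be uniformly bounded (thanks to the $[-1,1]$ bound on kernels) and linear, the required Lipschitz estimates are immediate, so the remaining work reduces to the standard $L_2$ approximation of $g$ and $W$ by their block averages, which I expect to be routine.
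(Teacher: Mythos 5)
Your proposal is correct and follows essentially the same route as the paper, which proves this theorem simply by invoking \cite[Theorem 5.2]{Med} (noting that the voting model is the linear special case $D(a,b)=b-a$ of Medvedev's Lipschitz coupling, with constants controlled by the uniform bound $|W|\le 1$, and that $\|W-W_n\|_2\rightarrow 0$ is exactly the convergence recorded in \Cref{approxgraphonforivp}). One small caveat: the martingale-convergence justification for $\|\boldsymbol{g}-\boldsymbol{g}_n\|_2\rightarrow 0$ does not literally apply, since the $\sigma$-algebras generated by the partitions $\mathcal{P}_n$ are not nested as $n$ increases, but your alternative argument (block averaging is an $L_2$-contraction plus density of continuous functions) closes this gap, so the proposal stands.
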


For the voting model, \Cref{approx} shows that solutions to the IVP (\ref{IVPapprox}) can be approximated by the graph limit IVP (\ref{PDE}). However, in general replacing IVPs on sequences of convergent graphs with the IVP on their graph limit does not guarantee convergence \cite{medtwisted}.


\section{Reaching consensus}\label{s6}


In \Cref{s4} using the results of \cite{Med} we were able to show that the continuum voting model could be used to approximate the finite voting model for large graphs. However, this approximation is attained in the $C([0,T], L_2([0,1]))$-norm, and consensus of the continuum model does not imply consensus of the finite model, even for large graphs. This leaves open the question of whether graph limit theory can be used to infer consensus in the finite voting model.

We now prove our main result which answers the above question in the affirmative. We show that if the solution to the continuum model (\ref{PDE}) reaches consensus then solutions to the finite model (\ref{approxPDE}) will be close to a constant function, for sufficiently large $n$ and sufficiently large $t$.  This provides motivation for the continued study of the continuum model and, in particular, the search for sufficient conditions which guarantee consensus - this is pursued in \Cref{s6.1}. 

First we present a lemma which will be used the proof of our main result \Cref{keytheorem}. 

\begin{Lemma}\label{conservation} Let $W\in\mathcal{W}_1$ be a kernel and let $\boldsymbol{g}\in L_\infty([0,1])$. If $\boldsymbol{u}$ is the solution of the IVP (\ref{PDE}) then 
\begin{align}
\label{conservationeqlemma}
\int_0^1 u(x,t)\,dx&=\int_0^1 g(x)\,dx \qquad\text{for any $0\le t<\infty$.}
\end{align}
\end{Lemma}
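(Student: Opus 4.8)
The plan is to show that the integral $\int_0^1 u(x,t)\,dx$ is constant in $t$ by differentiating it and verifying that the derivative vanishes. This is the natural ``conservation of total opinion'' statement one expects from a consensus dynamics: the right-hand side of \eqref{PDE} is built from antisymmetric differences $u(y,t)-u(x,t)$ weighted by a symmetric kernel, so integrating over all $x$ should cancel everything out.

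Concretely, I would first justify differentiating under the integral sign. Since $\boldsymbol{u}\in C^1(\mathbb{R},L_\infty([0,1]))$ by \Cref{existanduniq}, the map $t\mapsto u(\cdot,t)$ is continuously differentiable into $L_\infty([0,1])$, and $[0,1]$ has finite measure, so $L_\infty([0,1])\subseteq L_1([0,1])$ with a bounded inclusion; this legitimises pulling the $t$-derivative inside $\int_0^1 \cdot\,dx$. Thus
\begin{align}
\label{conservationplan}
\frac{d}{dt}\int_0^1 u(x,t)\,dx = \int_0^1 \frac{\partial u(x,t)}{\partial t}\,dx = \int_0^1\!\!\int_0^1 W(x,y)\bigl(u(y,t)-u(x,t)\bigr)\,dy\,dx.
\end{align}

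The key step is then to show the double integral on the right of \eqref{conservationplan} equals zero. I would split it into two pieces and apply Fubini's theorem (justified since $W$ is bounded in $[-1,1]$ and $u(\cdot,t)\in L_\infty$, so the integrand is bounded hence integrable on $[0,1]^2$). Swapping the names of the dummy variables $x$ and $y$ in one of the pieces and invoking the symmetry $W(x,y)=W(y,x)$ shows the two pieces are negatives of one another, so their sum vanishes. Hence $\frac{d}{dt}\int_0^1 u(x,t)\,dx=0$ for all $t$, and integrating from $0$ to $t$ together with the initial condition $\boldsymbol{u}(0)=\boldsymbol{g}$ yields \eqref{conservationeqlemma}.

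The main obstacle is purely technical rather than conceptual: it lies in rigorously justifying the interchange of differentiation and integration in \eqref{conservationplan} and the use of Fubini's theorem, both of which rely on the regularity guaranteed by \Cref{existanduniq} and the boundedness of $W$ and $\boldsymbol{u}$. Once these measure-theoretic interchanges are in place, the vanishing of the double integral is an immediate consequence of the symmetry of $W$ and the antisymmetry of the difference $u(y,t)-u(x,t)$ under interchange of $x$ and $y$.
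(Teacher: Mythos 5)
Your proof is correct and follows essentially the same route as the paper: the paper's own ``proof'' is merely a citation to \cite[Lemma 3.5]{Med1}, whose argument is precisely the one you spell out --- differentiate under the integral (legitimate since $\boldsymbol{u}\in C^1(\mathbb{R},L_\infty([0,1]))$ by \Cref{existanduniq} and integration over $[0,1]$ is a bounded linear functional on $L_\infty([0,1])$), then annihilate the resulting double integral via Fubini together with the symmetry of $W$ and the antisymmetry of $u(y,t)-u(x,t)$ under swapping $x$ and $y$. Your write-up simply supplies the details that the paper delegates to the reference.
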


\begin{proof} 
This proof follows similarly to that of \cite[Lemma 3.5]{Med1}.
\end{proof}

\begin{Theorem} \label{keytheorem}
Let $\boldsymbol{u}$ be a solution to the IVP (\ref{PDE}), with kernel $W\in\mathcal{W}_1$ and initial condition function $\boldsymbol{g}\in L_\infty([0,1])$, and let $\boldsymbol{u}_n$ be the solution to the approximate IVP (\ref{approxPDE}) for each positive integer $n$. Suppose that $\boldsymbol{u}$ reaches consensus on $[0,1]$ and let $D$ be any positive real number. Then for every $\varepsilon>0$ and for every $c>0$, there exists $T=T(\varepsilon)$ and a subset $S_t\subseteq[0,1]^2$ with $\lambda(S_t)<c^2$ such that for all sufficiently large $n$,
\begin{align}
\label{res}
|u_n(x,t)-u_n(y,t)|\le \varepsilon \qquad \text{ for all $(x,y)\in [0,1]^2\setminus S_t$ and $t\in [T,T+D]$.}
\end{align}
\end{Theorem}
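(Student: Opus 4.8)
The plan is to interpolate between the discrete solution $\boldsymbol u_n$ and the continuum solution $\boldsymbol u$, exploiting the fact that consensus is a statement about $\boldsymbol u$ alone while the discrete-to-continuum error is controlled by \Cref{approx}. The key device is to route the pairwise difference through $u$ via
\[
|u_n(x,t)-u_n(y,t)|\le |u_n(x,t)-u(x,t)|+|u(x,t)-u(y,t)|+|u(y,t)-u_n(y,t)|,
\]
rather than through the conserved average $\int_0^1 g$. The middle term will be made small \emph{deterministically} by consensus, while the outer two terms will be small only \emph{off a small set}, the set being forced small by the $L_2$ approximation. I avoid comparing $u_n$ directly to a constant because an $L_2$-bound of order $\varepsilon$ only yields an exceptional set of order one through Markov's inequality; keeping $u$ in the middle decouples the size of the exceptional set (driven by $\|u_n-u\|_2\to0$) from the consensus tolerance (fixed at scale $\varepsilon$), which is exactly what lets $T$ depend on $\varepsilon$ alone.

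First I would fix $\varepsilon,c,D>0$. Using \Cref{condef} (consensus on $[0,1]$) I choose $T=T(\varepsilon)$ so that $\underset{(x,y)\in[0,1]^2}{\mathrm{ess\ sup}}\,|u(x,t)-u(y,t)|\le \varepsilon/2$ for all $t\ge T$; for each such $t$ this means the bound $|u(x,t)-u(y,t)|\le\varepsilon/2$ holds off a two-dimensional null set $N_t\subseteq[0,1]^2$. Since $T+D<\infty$, \Cref{approx} applies on $[0,T+D]$ and yields $\rho_n:=\max_{t\in[0,T+D]}\|u_n(\cdot,t)-u(\cdot,t)\|_2\to0$. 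For $t\in[T,T+D]$ set $F_t:=\{x\in[0,1]:|u_n(x,t)-u(x,t)|>\varepsilon/4\}$; Markov's inequality gives $\lambda(F_t)\le \|u_n(\cdot,t)-u(\cdot,t)\|_2^2/(\varepsilon/4)^2\le 16\rho_n^2/\varepsilon^2$, uniformly in $t$. For each sufficiently large $n$ (so that $\rho_n<c\varepsilon/(4\sqrt2)$, forcing $\lambda(F_t)<c^2/2$) I then put $S_t:=(F_t\times[0,1])\cup([0,1]\times F_t)\cup N_t$, built from this $n$, so that $\lambda(S_t)\le 2\lambda(F_t)<c^2$. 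For every $(x,y)\in[0,1]^2\setminus S_t$ we have $x,y\notin F_t$ and $(x,y)\notin N_t$, so the three terms above are bounded by $\varepsilon/4$, $\varepsilon/2$, $\varepsilon/4$, which gives (\ref{res}).

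The main obstacle is precisely the mismatch of convergence modes: consensus controls $u$ in essential supremum, whereas \Cref{approx} only supplies $C([0,T];L_2)$ control of $u_n$, so one cannot hope for a uniform pointwise bound on $u_n$ and the exceptional set is unavoidable. The quantitative heart of the argument is therefore the conversion of an $L_2$ bound into a measure bound through Markov's inequality, together with the observation that $\rho_n\to0$ lets this set be made smaller than any prescribed $c^2$ while $T$ remains pinned by $\varepsilon$. Finally, although the argument above needs only the oscillation form of consensus, \Cref{conservation} is what identifies the common limiting value as the initial average $\int_0^1 g(x)\,dx$; invoking it lets one upgrade (\ref{res}) to closeness of $u_n(\cdot,t)$ to the explicit constant function $\int_0^1 g$, which is the ``close to a constant function'' conclusion advertised in the introduction.
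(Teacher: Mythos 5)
Your proposal is correct and takes essentially the same route as the paper's own proof: the identical triangle-inequality decomposition through $u$, consensus (with a $T$ depending only on $\varepsilon$) controlling the middle term, and the $C([0,T+D];L_2)$ convergence of \Cref{approx} converted into a small exceptional set $S_t$ by a second-moment measure bound. The only difference is cosmetic: the paper invokes \Cref{conservation} so as to phrase this bound as Chebyshev's inequality for a mean-zero variable, whereas you apply Markov's inequality to $(u_n-u)^2$ directly --- which, as you correctly observe, shows the conservation lemma is not needed for the oscillation estimate itself, only for identifying the limiting constant as $\int_0^1 g$.
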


\begin{proof}
For any positive integer $n$, applying the triangle inequality three times gives the following upper bound on the consensus equation namely, (\ref{consensuseq1}) with $A=[0,1]$, of $\boldsymbol{u}_n$, for any fixed $t\in \mathbb{R}^{\ge 0}$:
\begin{align}
\label{attack}
&\underset{(x,y)\in [0,1]^2}{\mathrm{ess\ sup} }\,|u_n(x,t)-u_n(y,t)|\notag\\
&\quad\le 2\,\underset{x\in [0,1]}{\mathrm{ess\ sup} }\,|u_n(x,t)-u(x,t)|+\underset{(x,y)\in [0,1]^2}{\mathrm{ess\ sup} }\,|u(x,t)-u(y,t)|.
\end{align}

Since $\boldsymbol{u}$ reaches consensus, for any $\varepsilon>0$ there exists $T>0$ such that 
\begin{align}
\label{bound2}
\underset{(x,y)\in [0,1]^2}{\mathrm{ess\ sup} }\, |u(x,t)-u(y,t)|<\frac{\varepsilon}{3} \qquad\text{for all }\, t\ge T.
\end{align} 
Let $X$ be a random variable uniformly distributed in $[0,1]$, denoted as $X\sim U[0,1]$, and define the continuous-time processes $\{u(X,t)\}_{t\ge 0}$ and $\{u_n(X,t)\}_{t\ge 0}$. By \linebreak \Cref{conservation}, we have $\mathbb{E}[u(X,t)-u_n(X,t)]=\int \boldsymbol{g}-\int\boldsymbol{g}_n=0$. Then Chebyshev's inequality \cite[Section 7.3]{Will} gives, for all $t>0$,
\begin{align}
\label{bound0}
\mathbb{P}\Big[\big|u(X,t)-u_n(X,t)\big|>\varepsilon\Big] &\le \frac{\mathbb{E}\big[\big(u(X,t)-u_n(X,t)\big)^2\big]}{\varepsilon^2}\notag\\
&=\frac{\|\boldsymbol{u}(t)-\boldsymbol{u}_n(t)\|^2_{2}}{\varepsilon^2}.
\end{align}

 Now for any $D>0$, we have $\boldsymbol{u}_n\rightarrow \boldsymbol{u}$ in $C([0, T+D]; L_2([0,1]))$-norm (\Cref{approx}), and from (\ref{bound0}) we have
\begin{align}
\label{bound3}
\sup_{t\in [0,T+D]} \mathbb{P}\Big[\big|u(X,t)-u_n(X,t)\big|>\nfrac{\varepsilon}{3}\Big]&\le\Big(\frac{3\|\boldsymbol{u}-\boldsymbol{u}_n\|_{C([0, T+D]; L_2([0,1])}}{\varepsilon}\Big)^2\rightarrow 0,
\end{align}
for large $n$. Thus, for any $\varepsilon>0$ and any $c>0$ there exists an $N$ such that for all $n>N$,
\begin{align}
\label{bound4}
\|\boldsymbol{u}-\boldsymbol{u}_n\|_{C([0, T+D]; L_2([0,1]))}<\frac{c\,\varepsilon}{3\sqrt{2}}.
\end{align}
Combining (\ref{bound0}) and (\ref{bound4}), we see that for each $t\in [0, T+D]$, the set
$$A_t=\Big\{x\in [0,1]: \big|u(x,t)-u_n(x,t)\big|>\nfrac{\varepsilon}{3}\Big\}$$
has Lebesgue measure at most $\nicefrac{c^2}{2}.$ It follows that 
$$S_t=\big\{(x,y)\in [0,1]^2: x\in A_t \text{ or } y\in A_t\big\}$$
has Lebesgue measure strictly less than $c^2$.

Applying (\ref{attack}) and (\ref{bound2}) we attain the required result: for every $\varepsilon>0$ and for every $c>0$ there exists a positive integer $N$ such that for every $t\in [T,T+D]$, if $n>N$ then 
\begin{align*}
|u_n(x,t)-u_n(y,t)|\le \varepsilon \qquad \text{for all }\, (x,y)\in [0,1]^2\backslash S_t. \tag*{\qedhere}
\end{align*}
\end{proof}

To illustrate the above result we return to a generalised version of the consensus problem introduced in \Cref{Example1}.

\begin{Example}
For each $0<r<\frac{1}{2}$ define the kernel 
\begin{align}\label{bipartitekernel}
W_r(x,y)&=\begin{cases}
-1 &\text{if $x, y<r$}\\
1 &\text{otherwise,}
\end{cases}
\end{align}
and let $\mathcal{G}_r\in L_\infty([0,1])$ be the family of functions such that 
\begin{align*}
\int_0^r g_r(y)=\int_r^1 g_r(y)=0.
\end{align*}
Note that when $r=\frac{1}{3}$ we attain the function $W$ introduced in \Cref{Example1} and illustrated in \Cref{fig1}.

For any positive integer $n$ and when $r=\frac{1}{3}$ the consensus problem from \Cref{Example1} can be attained via the approximating process defined in (\ref{IVPapprox}) and (\ref{approxPDE}). Thus, for any initial condition function $\boldsymbol{g}\in L_\infty([0,1])$ we can apply \Cref{keytheorem} to approximately solve the consensus problem by considering the continuum IVP (\ref{PDE}) with kernel (\ref{bipartitekernel}).

The unique solution of the continuum IVP (\ref{PDE}) with kernel $W_r$  and initial condition function $\boldsymbol{g}\in \mathcal{G}_r$ is
\begin{align*}
u(x,t)&=\begin{cases}
g(x) \, e^{-t} &\text{if $x\ge r$}\\
g(x) \, e^{-(1-2r)t} &\text{if $x< r$.}
\end{cases}
\end{align*}
This solution can be easily verified to satisfy (\ref{PDE}). The piecewise solution reflects the structure of the kernel $W_r$ defined in (\ref{bipartitekernel}) with $x\in [0,r)$ decaying at a slower rate than $x\in [r,1]$. This is due to the negative value of $W_r(x,y)$ for $x,y<r$ which, informally speaking, produces a push away from consensus. However, as can be observed when $0<r<\frac{1}{2}$, the solution $u(x,t)$ converges to zero for all $x\in [0,1]$ as $t\rightarrow \infty$ and hence consensus is attained.

Thus, we conclude that consensus is attained in the continuum model and so by \Cref{keytheorem} we can get arbitrarily close to consensus in the discrete model for sufficiently large graphs. \hfill $\diamond$
\end{Example}

 The above example illustrates an application of \Cref{keytheorem}. However, finding kernels which attain consensus such as (\ref{bipartitekernel}) so that \Cref{keytheorem} can be applied is a non-trivial task.  The remainder of this section and \Cref{s6.1} focuses on understanding solutions to the continuum voter model and characterising necessary and sufficient conditions for consensus.

 We now establish an almost everywhere pointwise limit for solutions to the continuum model. Recall that vector-valued functions are written in bold font.

\begin{definition}\label{boundedsolution}
Let $\boldsymbol{u}$ be a solution to the IVP (\ref{PDE}). We say that $\boldsymbol{u}$ is a \em bounded solution \em if 
\begin{align*}
\sup_{t\ge 0} \,\|\boldsymbol{u}(t)\|_{\infty}=\sup_{t\ge 0}\Big\{ \underset{x\in [0,1]}{\mathrm{ess\ sup} }\,|u(x,t)|\Big\}<\infty.
\end{align*}
\end{definition}

In the continuum model, if $W\in \mathcal{W}_0$ is a graphon then the solution $\boldsymbol{u}$ is automatically bounded.

\begin{Lemma}\label{graphonboundedsol} \cite[Theorem 3.1]{Ignat}\\
Let $ W\in \mathcal{W}_0$ and let $\boldsymbol{g}\in L_\infty([0,1])$. If $\boldsymbol{u}$ solves the IVP (\ref{PDE}) with $W$ and $\boldsymbol{g}$ then 
\begin{align*}
\|\boldsymbol{u}(t)\|_{\infty}\le \|\boldsymbol{g}\|_{\infty} \qquad\text{for all }\, t\ge 0.
\end{align*} 
Hence $\boldsymbol{u}$ is a bounded solution.
\end{Lemma}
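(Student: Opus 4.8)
The plan is to establish a maximum principle, exploiting the fact that a graphon is \emph{nonnegative}: since $0\le W\le 1$, the evolution in (\ref{PDE}) is a genuine averaging process that can never amplify the initial data beyond its own range. First I would rewrite (\ref{PDE}) in the form $\partial_t u(x,t)=\int_0^1 W(x,y)u(y,t)\,dy-d(x)\,u(x,t)$, where $d(x):=\int_0^1 W(x,y)\,dy$ is the continuum degree function; crucially, $0\le d(x)\le 1$ for a.e.\ $x$. Treating this as a scalar linear ODE in $t$ for each fixed $x$ and using the integrating factor $e^{d(x)t}$, I would derive the Duhamel (mild) representation
\begin{align*}
u(x,t)=e^{-d(x)t}g(x)+\int_0^t e^{-d(x)(t-s)}\int_0^1 W(x,y)u(y,s)\,dy\,ds,
\end{align*}
whose validity is underwritten by $\boldsymbol{u}\in C^1(\mathbb{R},L_\infty([0,1]))$ from \Cref{existanduniq}.

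Next, writing $M=\|\boldsymbol{g}\|_\infty$ and $N(t)=\|\boldsymbol{u}(t)\|_\infty$ (continuous in $t$ by the regularity of $\boldsymbol{u}$), I would invoke $W\ge 0$ to bound the inner integral by $\int_0^1 W(x,y)|u(y,s)|\,dy\le d(x)N(s)$. Since $e^{-d(x)t}+\int_0^t e^{-d(x)(t-s)}d(x)\,ds=1$, this produces the \emph{convex-combination bound}
\begin{align*}
|u(x,t)|\le M\,e^{-d(x)t}+\int_0^t e^{-d(x)(t-s)}d(x)N(s)\,ds\qquad\text{for a.e. }x,
\end{align*}
which is the heart of the matter: the value at time $t$ is a weighted average of past values with nonnegative weights summing to one.

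To upgrade this into the global bound $N(t)\le M$, I would run a first-exit-time argument. Assuming for contradiction that $N(t_1)>M$ for some $t_1$, let $t_0$ be the last time before $t_1$ at which $N=M$, so that $N(s)\le M$ on $[0,t_0]$ and $N(s)>M$ on $(t_0,t_1]$. Splitting the Duhamel integral at $t_0$ and inserting these bounds gives, for $t\in(t_0,t_1]$,
\begin{align*}
|u(x,t)|\le M\,e^{-d(x)(t-t_0)}+\bar{N}\big(1-e^{-d(x)(t-t_0)}\big),
\end{align*}
where $\bar{N}=\sup_{(t_0,t_1]}N$. Here the bound $d(x)\le 1$ lets me replace the weight by $1-e^{-(t_1-t_0)}<1$; taking the essential supremum over $x$ and then the supremum over $t\in(t_0,t_1]$ forces $(\bar{N}-M)e^{-(t_1-t_0)}\le 0$, i.e.\ $\bar{N}\le M$, contradicting $N(t_1)>M$. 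This proves $\|\boldsymbol{u}(t)\|_\infty\le M$ for all $t$, and boundedness of $\boldsymbol{u}$ is then immediate.

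The main obstacle is the convex-combination step together with the first-exit-time argument, which rests entirely on the two structural features of a graphon: the nonnegativity $W\ge 0$ (so that the dynamics average rather than amplify) and the uniform degree bound $d(x)\le 1$ (needed to close the contradiction with a weight strictly below one). For a general kernel $W\in\mathcal{W}_1$ the negative part destroys the convex-combination structure, which is precisely why the lemma is restricted to graphons and why negative weights, as in the kernel $W_r$ of (\ref{bipartitekernel}), can push solutions away from their initial range. A secondary technical point to handle with care is the passage between the a.e.\ pointwise representation and the essential-supremum quantity $N(t)$, and the justification of the integrating-factor manipulation at the level of $C^1(\mathbb{R},L_\infty)$ rather than through classical pointwise derivatives.
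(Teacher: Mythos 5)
Your proof is correct, but it is worth knowing that the paper contains no proof of this lemma at all: the statement is imported directly from Ignat and Rossi \cite[Theorem 3.1]{Ignat}, whose original argument for such nonlocal evolution problems is based on energy methods. Your proposal is therefore a genuinely different and self-contained route: you derive the mild (integrating-factor) representation --- which is exactly the identity the paper itself derives later as (\ref{intfact}) in the proof of \Cref{decomposedynamics} --- and then run a convex-combination/first-exit-time maximum principle. What this buys is twofold: it makes the lemma self-contained within the paper's framework (using only \Cref{existanduniq} and the mild formulation the paper already relies on), and it isolates precisely the two structural hypotheses doing the work: nonnegativity $W\ge 0$, which makes the dynamics an averaging process, and the uniform bound $W\le 1$, hence $d_W(x)\le 1$, which gives the weight $e^{-(t_1-t_0)}>0$ bounded away from zero that closes the contradiction. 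Note also that a naive Gr\"onwall estimate from the same representation would only give $\|\boldsymbol{u}(t)\|_\infty\le \|\boldsymbol{g}\|_\infty e^{t}$, so your exit-time refinement is genuinely needed, and your verification of the convex-combination identity $e^{-d(x)t}+\int_0^t e^{-d(x)(t-s)}d(x)\,ds=1$ is the right pivot.

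Two minor points, neither of which damages the argument. First, your parenthetical claim that $N(s)\le M$ on $[0,t_0]$ does not actually follow from defining $t_0$ as the last time $N=M$ before $t_1$ (under the contradiction hypothesis, $N$ could in principle have exceeded $M$ and returned earlier); but this claim is never used --- your estimate only needs $N(t_0)=M$ and $N\le \bar N$ on $(t_0,t_1]$, both of which hold, together with finiteness of $\bar N$ from continuity of $N$ on the compact interval $[t_0,t_1]$. Second, the passage from the $C^1\big(\mathbb{R},L_\infty([0,1])\big)$ solution to the pointwise-a.e.\ Duhamel formula, which you rightly flag as a technical point, is the same manipulation the paper performs without further comment to obtain (\ref{intfact}), so your treatment is consistent with the paper's own standard of rigor.
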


We now show that any bounded solution to the continuum model (\ref{PDE}) is a continuous-time martingale. It is important to note that \Cref{graphonboundedsol} only applies to the IVP (\ref{PDE}) when $W\in \mathcal{W}_0 \subsetneq \mathcal{W}_1$ is a graphon - a nonnegative valued kernel. Whilst, \Cref{ctsmart} (to be presented) applies to the IVP (\ref{PDE}) for any $W\in \mathcal{W}_1$ and $\boldsymbol{g}\in L_\infty([0,1])$ which admits a bounded solution.

The following definition involves the conditional expectation operator which is covered in detail by \cite[Chapter 9]{Will}. For the purposes of this paper we need only briefly explain the notation; let $X$ be a random variable, we denote the conditional expectation of $X$ given another random variable $Y$ by the random variable
\begin{align}\label{condexp}
Z=\mathbb{E}\big[ X \big| Y\big].
\end{align}
If $Y(\omega)=y_j$ for some $\omega$ in the probability space of $Y$ then we interpret (\ref{condexp}) as
\begin{align*}
Z(\omega)&=\mathbb{E}\big[ X \big| Y=y_j\big].
\end{align*}
See for full details \cite[Chapter 9]{Will}.

\begin{definition}
A family of random variables $\{Z_t:\, t\ge 0\}$ is called a continuous-time \em martingale \em  if 
\begin{align}
 \mathbb{E}\big[|Z_t|\big]&<\infty \qquad \text{ for all $t\ge 0$, and}\\
\label{martcond}
\mathbb{E}\big[Z_t\big| \{Z_r: \, 0\le r\le s\}\big]&=Z_s  \qquad\text{ for all }\,  0\le s<t .
\end{align}
\end{definition}

Recall $X \sim U[0,\, 1]$ denotes a random variable $X$ uniformly distributed in the interval $[0,\, 1]$.

\begin{Proposition}\label{ctsmart} Let $\boldsymbol{u}$ be a solution to the IVP (\ref{PDE}) with kernel $W\in\mathcal{W}_1$ and initial condition function $\boldsymbol{g}\in L_\infty([0,1])$. If $\boldsymbol{u}$ is a bounded solution and $X\sim U[0,\,1]$ then the continuous-time process $\{Z_t\}_{t\ge 0}$ defined by $Z_t=u(X,t)$ is a bounded continuous-time martingale. Further, $\{Z_t\}_{t\ge 0}$ converges almost surely as $t\rightarrow \infty$.
\end{Proposition}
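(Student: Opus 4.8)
The plan is to expose the probabilistic structure hidden in the evolution operator. Writing $(\mathcal{A}f)(x)=\int_0^1 W(x,y)\bigl(f(y)-f(x)\bigr)\,dy$, the IVP (\ref{PDE}) reads $\partial_t u=\mathcal{A}u$, so $u(\cdot,t)=e^{t\mathcal{A}}g$. When $W$ is a graphon the operator $\mathcal{A}$ is the generator of a continuous-time jump process $\{\xi_t\}_{t\ge0}$ on $[0,1]$ (from $x$, jump at rate $W(x,y)\,dy$), which is \emph{reversible} with respect to Lebesgue measure because $W$ is symmetric; hence $u(x,t)=\mathbb{E}_x[g(\xi_t)]$, and the uniform law is stationary. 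This stationarity is exactly the continuum conservation identity of \Cref{conservation}, which will supply the base case $\mathbb{E}[Z_t]=\int_0^1 g=\mathbb{E}[Z_0]$. The first and routine step is integrability: since $\boldsymbol{u}$ is a bounded solution, $|Z_t|=|u(X,t)|\le \sup_{t\ge0}\|\boldsymbol{u}(t)\|_\infty=:M<\infty$ almost surely, so $\mathbb{E}[|Z_t|]\le M$ for every $t$ and the process is uniformly bounded.

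The heart of the argument is the martingale identity $\mathbb{E}\bigl[Z_t\mid\{Z_r:0\le r\le s\}\bigr]=Z_s$. In the Markov picture the object that is \emph{manifestly} a martingale is the Doob/time-reversed process $M_s=u(\xi_s,T-s)=\mathbb{E}[g(\xi_T)\mid \mathcal{F}^{\xi}_s]$, in which the \emph{spatial} argument moves and time runs backward. The difficulty — and the step I expect to be the main obstacle — is reconciling this with the stated process $Z_t=u(X,t)$, in which $X$ is a single fixed random seed: because $Z_t$ is a deterministic function of $X$, the natural filtration $\sigma(Z_r:r\le s)$ threatens to already determine $X$, so a naive differentiation in $t$ will not deliver the identity. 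Establishing the conditional-expectation relation therefore requires pinning down precisely the right filtration and using reversibility (time-reversal invariance of the semigroup) in an essential way. I would isolate this difficulty by first treating step-kernels, where the problem reduces to a finite reversible continuous-time Markov chain on $[n]$ with symmetric generator $Q_{ij}=\beta_{ij}/n$ carrying the uniform stationary law; the finite case is where I would concentrate the real work, since the symmetry of $e^{Qt}$ is the concrete input that must be converted into the conditional identity.

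To reach a general $W\in\mathcal{W}_1$, where $\mathcal{A}$ need not generate a genuine Markov process (signed $W$ produces negative rates), I would pass to the limit: approximate $W$ by the step-kernels $W_n$ of \Cref{approxgraphonforivp}, transport the finite-chain identity through the convergence $\boldsymbol{u}_n\to\boldsymbol{u}$ furnished by \Cref{approx}, and use the uniform bound $M$ to justify interchanging the limit with the conditional expectation. Once $\{Z_t\}_{t\ge0}$ is known to be a bounded continuous-time martingale, the almost-sure limit as $t\to\infty$ follows immediately from the martingale convergence theorem, since a uniformly bounded (hence $L^1$-bounded) martingale converges almost surely. Thus the entire weight of the proposition rests on the conditional-expectation identity of the middle paragraph, and that is where I would expect to spend essentially all of the effort.
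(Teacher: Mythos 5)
Your outer steps coincide with the paper's proof: boundedness of $\boldsymbol{u}$ gives $\mathbb{E}[|Z_t|]\le\sup_{t\ge0}\|\boldsymbol{u}(t)\|_\infty<\infty$, and the final claim is exactly the Martingale Convergence Theorem. But the entire middle of your proposal --- the identity $\mathbb{E}[Z_t\mid\{Z_r:0\le r\le s\}]=Z_s$ --- is a plan rather than a proof, and the plan cannot be completed, because the obstruction you yourself flag (``the natural filtration threatens to already determine $X$'') is not a technicality to be engineered around: it makes the identity false. Take $W\equiv 1$ and $g(x)=x$. The unique bounded solution of (\ref{PDE}) is $u(x,t)=\tfrac12+(x-\tfrac12)e^{-t}$, so $Z_t=\tfrac12+(Z_0-\tfrac12)e^{-t}$ is an invertible deterministic function of $Z_0$; hence every $Z_t$ is $\sigma(Z_0)$-measurable, and $\mathbb{E}\bigl[Z_t\mid\{Z_r:r\le s\}\bigr]=Z_t\neq Z_s$ almost surely whenever $s\neq t$. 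Moreover, by the tower property, if (\ref{martcond}) held for \emph{any} filtration to which $\{Z_t\}$ is adapted it would hold for the natural one, so no choice of filtration --- and no appeal to reversibility --- rescues the statement. The Doob object you propose to transfer, $M_s=u(\xi_s,T-s)$, is a genuinely different process (its spatial argument moves, its time runs backward, and it depends on a horizon $T$); its martingale property does not pass to $Z_t=u(X,t)$, and the step-kernel reduction via \Cref{approxgraphonforivp} and \Cref{approx} inherits the same failure at every finite $n$, since the example above is precisely the complete graph.

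For comparison, the paper's own proof runs into exactly the trap you identified and does not escape it: starting from $u(x,t)=u(x,s)+\int_s^t\int_0^1 W(x,y)\bigl(u(y,r)-u(x,r)\bigr)\,dy\,dr$, it takes the conditional expectation of the increment and then silently evaluates it as the \emph{unconditional} average $\int_0^1(\cdots)\,dx$, which Fubini's theorem and the symmetry of $W$ show to vanish. That computation establishes only $\mathbb{E}[Z_t]=\mathbb{E}[Z_s]$ --- which is \Cref{conservation} again --- and not the conditional identity (\ref{martcond}); the counterexample shows this gap is unfixable rather than cosmetic. So your diagnosis of where the difficulty lies is sharper than the paper's treatment of it, but neither your proposal nor the paper's argument proves the proposition as stated. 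What the rest of the paper actually uses (\Cref{remarkctsmart}, \Cref{consisconstant}, \Cref{prevlem}) is the almost-everywhere existence of $\lim_{t\to\infty}u(x,t)$ for bounded solutions; that conclusion is plausible but needs a different argument altogether, e.g.\ a spectral-theoretic one exploiting that the operator $f\mapsto\int_0^1 W(\cdot,y)\bigl(f(y)-f(\cdot)\bigr)\,dy$ is bounded and self-adjoint on $L_2([0,1])$, with boundedness of the solution killing the spectral mass on the positive half-line --- and even then the passage from $L_2$-convergence to a.e.\ convergence requires additional work.
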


\begin{proof}
First note that by assumption the solution $\boldsymbol{u}$ is bounded and so $Z_t$ is bounded. That is, 
\begin{align*}
\sup_{t\ge0}\,\mathbb{E}[|Z_t|]&=\sup_{t\ge0}\,\int_0^1 |u(x,t)|\,dx\le \sup_{t\ge 0} \, \|\boldsymbol{u}(t)\|_\infty<\infty.
\end{align*}
Secondly, we have for all $x\in [0,1]$ and for all $t\in \mathbb{R}^{\ge 0}$,
\begin{align*}
u(x,t)&=u(x,s)+\int_s^t\int_0^1 W(x,y)\big(u(y,r)-u(x,r)\big)\,dy\,dr\ \text{ for any $0\le s\le t$}.
\end{align*}
Thus, taking the conditional expectation gives
\begin{align*}
\mathbb{E}[Z_t| \{Z_\tau, \tau\le s\}]&=Z_s+\mathbb{E}\Big[\int_s^t\int_0^1 W(X,y)\big(u(y,r)-u(X,r)\big)\,dy\,dr\,\Big|\{Z_\tau, \tau\le s\}\Big]\\
&=Z_s+\int_0^1\int_s^t\int_0^1 W(x,y)\big(u(y,r)-u(x,r)\big)\,dy\,dr\,dx\\
&=Z_s+\int_0^1\int_s^t\int_0^1 W(x,y)\,u(y,r)\,dy\,dr\,dx\\
&\qquad\,\,-\int_0^1\int_s^t\int_0^1W(x,y)\,u(x,r)\,dy\,dr\,dx.
\end{align*}
Since the integrands of the last two terms are absolutely integrable and bounded above by some positive constant, via Fubini's theorem \cite[Thereom 2.37]{Foll} we have that
\begin{align*}
&\int_0^1\int_s^t\int_0^1 W(x,y)\,u(y,r)\,dy\,dr\,dx-\int_0^1\int_s^t\int_0^1W(x,y)\,u(x,r)\,dy\,dr\,dx\\
&=\int_s^t\Bigg(\int_{[0,1]^2} W(x,y)\,u(y,r)\,dy\,dx-\int_{[0,1]^2}W(x,y)\,u(x,r)\,dx\,dy\Bigg)\,dr.
\end{align*}
Noting that $W$ is symmetric we have that the above expression equals zero. Hence the necessary condition (\ref{martcond}) holds and we have a continuous-time martingale.

The final claim in the corollary follows immediately from the Martingale Convergence Theorem \cite[Theorem 11.5]{Will}.
\end{proof}

\begin{Remark}\label{remarkctsmart}
 The final statement in the proposition above can equivalently be stated in deterministic terms: for all bounded solutions $\boldsymbol{u}$ to the IVP (\ref{PDE}) the limit $\lim_{t\rightarrow \infty} u(x,t)$ exists for almost every $x\in [0,1]$, and the function $\lim_{t\rightarrow \infty}\boldsymbol{u}(t)$ is integrable.
 \end{Remark}

 We now define the following function:
\begin{align*}
\boldsymbol{u}^*: [0,1]&\rightarrow \mathbb{R}\\
x &\mapsto \begin{cases}
\lim_{t\rightarrow \infty}u(x,t)&\text{if the limit exists,}\\
0&\text{otherwise.}
\end{cases}
\end{align*}

 It should be noted that the existence of $\lim_{t\rightarrow \infty} u(x,t)$ almost everywhere is not enough to ensure that consensus is attained. A trivial example is the constant $0$-valued graphon, which will have $\boldsymbol{u}(t)=\boldsymbol{u}^*=\boldsymbol{g}$ for any initial condition function $\boldsymbol{g}\in L_\infty([0,1])$. This degeneracy is due to the fact that the constant $0$-valued graphon is not connected (as per \Cref{connectedgraphon}).  \Cref{degenexample} will show that for a general initial condition function $\boldsymbol{g}$, a necessary condition for consensus is that the kernel be connected. 
 
First we characterise the limiting value, $\boldsymbol{u}^*$, if consensus is reached. 

\begin{Proposition}\label{consisconstant}
Given a kernel $W\in\mathcal{W}_1$ and initial condition $\boldsymbol{g}\in L_\infty([0,1])$, let $\boldsymbol{u}$ be a solution to the IVP (\ref{PDE}) on $W$ and $\boldsymbol{g}$. If $\boldsymbol{u}$ reaches consensus then 
\begin{align*}
\boldsymbol{u}^*(x)&=\int_0^1\boldsymbol{g}(y)\,dy\qquad \text{for almost every $x\in [0,1]$. }
\end{align*}
\end{Proposition}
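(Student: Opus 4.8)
The plan is to combine the conservation law (\Cref{conservation}) with the definition of consensus to produce a uniform bound showing that $u(\cdot,t)$ collapses to the constant $m:=\int_0^1 g(y)\,dy$, and then to reconcile this collapse with the almost-everywhere pointwise limit supplied by \Cref{ctsmart}. Write $M(t):=\mathrm{ess\ sup}_{(x,y)\in[0,1]^2}|u(x,t)-u(y,t)|$. First I would use \Cref{conservation}, which gives $\int_0^1 u(y,t)\,dy=m$ for every $t\ge0$, to rewrite the deviation from the mean: for each fixed $t$, Fubini's theorem shows that for almost every $x$ one has $u(x,t)-m=\int_0^1\big(u(x,t)-u(y,t)\big)\,dy$, and hence
\begin{align*}
|u(x,t)-m|\le\int_0^1|u(x,t)-u(y,t)|\,dy\le M(t).
\end{align*}
Taking the essential supremum over $x$ yields $\mathrm{ess\ sup}_{x\in[0,1]}|u(x,t)-m|\le M(t)$.

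Next I would verify that $\boldsymbol{u}$ is a bounded solution, so that \Cref{ctsmart} applies. Since $\boldsymbol{u}$ reaches consensus, $M(t)\to0$, so $M(t)\le1$ for all $t$ beyond some $T_0$; combined with the bound above this controls $\|\boldsymbol{u}(t)\|_\infty$ by $|m|+1$ for $t\ge T_0$, while continuity of $t\mapsto\|\boldsymbol{u}(t)\|_\infty$ on the compact interval $[0,T_0]$ handles small times. Thus $\sup_{t\ge0}\|\boldsymbol{u}(t)\|_\infty<\infty$ and $\boldsymbol{u}$ is bounded. Invoking \Cref{ctsmart} (via the Martingale Convergence Theorem) then guarantees that $\lim_{t\to\infty}u(x,t)$ exists for almost every $x$, so that $\boldsymbol{u}^*(x)$ genuinely equals this limit off a null set.

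Finally I would identify the value of the limit. The bound $\mathrm{ess\ sup}_{x}|u(x,t)-m|\le M(t)\to0$ says precisely that $u(\cdot,t)\to m$ in $L_\infty([0,1])$, hence also in $L_1([0,1])$. Choosing any sequence $t_n\to\infty$, boundedness of $\boldsymbol{u}$ together with the almost-everywhere convergence $u(\cdot,t_n)\to\boldsymbol{u}^*$ gives $u(\cdot,t_n)\to\boldsymbol{u}^*$ in $L_1$ by dominated convergence; comparing with the $L_1$ limit $m$ and using uniqueness of limits in $L_1$ forces $\boldsymbol{u}^*=m$ almost everywhere, which is the claim.

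I expect the main obstacle to be the last identification step rather than the conservation bound. The difficulty is that the definition of $\boldsymbol{u}^*$ requires the \emph{full} limit as $t\to\infty$ to exist at $x$, whereas the consensus bound only controls each fixed time on a null set that depends on $t$; an uncountable union of such null sets could have positive measure, so one cannot directly deduce pointwise almost-everywhere convergence from the bound alone. The resolution is to obtain almost-everywhere existence of the limit \emph{separately} from \Cref{ctsmart}, and only afterwards use the $L_1$ convergence to pin down its value; consequently the crucial enabling move is verifying boundedness so that \Cref{ctsmart} may be applied.
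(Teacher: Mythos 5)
Your proposal is correct and follows essentially the same route as the paper's proof: the conservation law (\Cref{conservation}) combined with consensus to establish that $\boldsymbol{u}$ is a bounded solution, then \Cref{ctsmart} to obtain the almost-everywhere limit $\boldsymbol{u}^*$, and dominated convergence to identify its value. The only difference is in the last step, where the paper applies dominated convergence to the conserved integral and then asserts that $\boldsymbol{u}^*$ is a.e.\ constant because consensus holds, whereas you pin down the value via uniqueness of $L_1$ limits along a sequence $t_n\to\infty$ --- a slightly more explicit treatment of the $t$-dependent null sets that the paper's assertion glosses over.
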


\begin{proof}
By \Cref{conservation} we have
\begin{align}
\label{equalinitial}
\int_0^1 u(x,t)dx=\int_0^1 g(y)\,dy \qquad\text{for all }0\le t<\infty.
\end{align}
If $\boldsymbol{u}$ reaches consensus then $\boldsymbol{u}$ is a bounded solution (as per \Cref{boundedsolution}) and so $\lim_{t\rightarrow \infty} u(x,t)$ exists for almost every $x\in [0,1]$ (see \Cref{ctsmart} and \Cref{remarkctsmart}). To see that $\boldsymbol{u}$ is bounded observe the following; for fixed $x\in [0,1]$
\begin{align*}
|u(x,t)|&=\Big|\int_0^1 u(x,t)-u(y,t) + u(y,t)\, dy\Big|\\
&=\Big|\int_0^1 u(x,t)-u(y,t)\, dy +\int_0^1 g(y)\, dy\Big| &&\text{from (\ref{equalinitial}),}\\
&\le \int_0^1  \big|u(x,t)-u(y,t)\big|\, dy +\Big|\int_0^1 g(y)\, dy \Big|.
\end{align*}
Now by assumption $\boldsymbol{u}$ reaches consensus and so for almost every $x\in [0,1]$
\begin{align*}
 \lim_{t\rightarrow \infty} |u(x,t)|&\le \Big|\int_0^1 g(y)\, dy \Big| \le 1,
\end{align*}
this suffices to show that $\boldsymbol{u}$ is a bounded solution.

Now to prove the statement in the proposition, we take the limit as $t$ approaches infinity of (\ref{equalinitial}) and applying the Dominated Convergence Theorem \cite[Theorem 2.24]{Foll} gives
\begin{align*}
\int_0^1 u^*(x)dx=\lim_{t\rightarrow \infty} \int_0^1 u(x,t)dx=\int_0^1 g(x)\,dx.
\end{align*}
But $\boldsymbol{u}^*$ is constant almost everywhere, since $\boldsymbol{u}$ reaches consensus. Thus 
\begin{align*}
\boldsymbol{u}^*(x)&=\int_0^1\boldsymbol{g}(y)\,dy\qquad \text{for almost every $x\in [0,1]$. }\tag*{\qedhere}
\end{align*}
\end{proof}

As mentioned above, a necessary condition for consensus is that the kernel be connected (cf. \Cref{connectedgraphon}). The proof of this follows from \Cref{consisconstant} and the fact that the solution to the continuum voter model on a kernel can be decomposed into solutions on its connected components. The lemma below proves this claim, however, first we introduce some additional notation. 

Given a map $\varphi: [0,1] \rightarrow [0,1]$, we define the pull-back of the functions \linebreak$u: [0,1]\times \mathbb{R}\rightarrow \mathbb{R}$ and $g: [0,1]\rightarrow \mathbb{R}$ as
\begin{align*}
u^\varphi(x,t)&=u(\varphi(x),t) \qquad\text{and}\qquad g^\varphi(x)=g(\varphi(x)).
\end{align*}

\begin{Lemma} \label{decomposedynamics}
Let $W\in \mathcal{W}_1$ and $\boldsymbol{g}\in L_\infty([0,1])$. If $\boldsymbol{u}$ solves the IVP (\ref{PDE}) then there exists a countable family of positive real numbers $\{a_i\}_{i\in I}$ and a unique family of functions in $\{\boldsymbol{u}_i\}_{i\in I}\subseteq C^1(\mathbb{R}, L_\infty([0,1]))$ such that
\begin{align*}
\boldsymbol{u}(t)=\bigoplus_{i\in I} a_i \boldsymbol{u}_i(t),
\end{align*}
where $\boldsymbol{u}_i$ are solutions to the IVP (\ref{PDE}) on a connected kernel. We interpret this direct sum in a similar way to \Cref{directsum}: partition $[0,1]$ into intervals $J_i$ of length $a_i$ for $i\in I$. Then letting $\phi_i$ denote the nonnegative linear function from $J_i$ to $[0,1]$ (see (\ref{affinemap})) we have
\begin{align*}
u(x,t)=\sum_{i\in I} \mathbbm{1}_{J_i}(x)\, u_i(\phi_i(x),t).
\end{align*}
\end{Lemma}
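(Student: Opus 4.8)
The plan is to start from Janson's decomposition and transport the \emph{known} global solution onto each connected component, rather than building a solution from scratch. By \Cref{decompker} write $W=\bigoplus_{i\in I}a_iW_i$ with each $W_i$ connected, and let $\{J_i\}_{i\in I}$ be the associated partition of $[0,1]$ into intervals of length $a_i$ with affine maps $\phi_i\colon J_i\to[0,1]$ as in (\ref{affinemap}). Let $\boldsymbol u$ be the unique solution of (\ref{PDE}) guaranteed by \Cref{existanduniq}. For each $i\in I$ I would define the candidate component function by pulling $\boldsymbol u$ back through $\phi_i^{-1}$, namely $u_i(w,t):=u\big(\phi_i^{-1}(w),t\big)$ for $w\in[0,1]$, so that $u_i\big(\phi_i(x),t\big)=u(x,t)$ for $x\in J_i$. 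This makes the asserted identity $u(x,t)=\sum_{i}\mathbbm 1_{J_i}(x)\,u_i(\phi_i(x),t)$ hold \emph{by construction}, and it then remains only to check that each $u_i$ solves (\ref{PDE}) on a connected kernel.

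The key structural input is that the direct-sum form (\ref{directsumexplicit}) forces $W(x,y)=0$ whenever $x\in J_i$ and $y\in J_j$ with $i\neq j$; the off-diagonal blocks vanish, so each $J_i$ is invariant under the dynamics. Concretely, for a.e.\ $x\in J_i$ the integral in (\ref{PDE}) collapses to $\int_{J_i}W(x,y)\big(u(y,t)-u(x,t)\big)\,dy$, and substituting $W(x,y)=W_i\big(\phi_i(x),\phi_i(y)\big)$ together with the change of variables $z=\phi_i(y)$, whose Jacobian is $\lambda(J_i)=a_i$, yields
\begin{align*}
\frac{\partial}{\partial t}u_i(w,t)=a_i\int_0^1 W_i(w,z)\big(u_i(z,t)-u_i(w,t)\big)\,dz,
\end{align*}
with initial condition $u_i(\cdot,0)=g^{\phi_i^{-1}}$. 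Thus $u_i$ solves (\ref{PDE}) with kernel $a_iW_i$, which is connected because the criterion of \Cref{connectedgraphon} is invariant under multiplication by the positive constant $a_i$.

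For regularity, I would note that the spatial substitution $w\mapsto\phi_i^{-1}(w)$ is an affine bijection of $[0,1]$ onto $J_i$ with positive slope, hence an isometric relabeling that reflects null sets in both directions and satisfies $\|u_i(t)\|_\infty\le\|\boldsymbol u(t)\|_\infty$; since it acts only on the spatial variable it commutes with $\partial_t$, so $u_i\in C^1\big(\mathbb R,L_\infty([0,1])\big)$ follows directly from $\boldsymbol u\in C^1\big(\mathbb R,L_\infty([0,1])\big)$. Uniqueness of the family is then immediate: if $\{\boldsymbol u_i'\}$ is any family realizing the same decomposition with each $\boldsymbol u_i'$ solving (\ref{PDE}) on a connected kernel, then the identity $u(x,t)=u_i'(\phi_i(x),t)$ for a.e.\ $x\in J_i$ forces $\boldsymbol u_i'=\boldsymbol u_i$ in $L_\infty([0,1])$ for every $t$; equivalently, each $\boldsymbol u_i'$ solves the same component IVP and is pinned down by \Cref{existanduniq}.

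I expect the main obstacle to be measure-theoretic bookkeeping rather than any deep idea: one must verify that the a.e.\ manipulations are legitimate for the $L_\infty$-valued solution, that the affine change of variables is executed correctly so the Jacobian factor $a_i$ is tracked and correctly identifies the component kernel as $a_iW_i$ (not $W_i$), and that reassembling countably many components does not disturb membership in $C^1\big(\mathbb R,L_\infty([0,1])\big)$. The conceptual crux---recognizing that the vanishing off-diagonal blocks of the direct sum decouple the dynamics on the intervals $J_i$---is what makes the whole argument go through.
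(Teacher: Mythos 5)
Your proposal is correct and follows essentially the same route as the paper's proof: Janson's decomposition (\Cref{decompker}), pull-back of the global solution through $\phi_i^{-1}$, a change of variables on the diagonal blocks (using that $W$ vanishes off $\bigcup_i J_i\times J_i$), and an appeal to the uniqueness in \Cref{existanduniq} to pin down each component. One difference is purely presentational: you verify the differential form of (\ref{PDE}) directly, whereas the paper first rewrites the solution via an integrating factor as an integral equation and identifies the components there. But there is a substantive discrepancy worth flagging, and it is your computation that is the correct one. For $x\in J_i$, substituting $z=\phi_i(y)$ gives $dy=a_i\,dz$, hence $d_W(x)=\int_{J_i}W_i\bigl(\phi_i(x),\phi_i(y)\bigr)\,dy=a_i\int_0^1 W_i\bigl(\phi_i(x),z\bigr)\,dz$, so the component kernel is $a_iW_i$ exactly as you found (sanity check: $W_i\equiv 1$ gives $d_W(x)=\lambda(J_i)=a_i$). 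The paper's proof instead produces $\frac{1}{a_i}\int_0^1 W_i\bigl(\phi_i(x),z\bigr)\,dz$ and defines the component kernel as $\widehat{W}_i=\frac{1}{a_i}W_i$, which inverts the Jacobian; moreover $\frac{1}{a_i}W_i$ need not lie in $\mathcal{W}_1$ when $a_i<1$, so \Cref{existanduniq} would not directly apply to it as stated. Your scaling $a_iW_i$ stays in $\mathcal{W}_1$, remains connected (\Cref{connectedgraphon} is invariant under multiplication by a positive constant), and makes the uniqueness step clean; your handling of regularity (composition with a fixed affine bijection is a bounded linear operator on $L_\infty$ commuting with $\partial_t$) and of uniqueness of the family are both sound. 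In short: same strategy as the paper, but with the constant tracked correctly.
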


\begin{proof}
First, decompose the kernel $W\in \mathcal{W}_1$ into a direct sum of connected kernels as in \Cref{decompker}; that is,
\begin{align*}
W=\bigoplus_{i\in I} a_iW_i
\end{align*}
for a family of connected kernels $\{W_i\}_{i\in I}\subseteq \mathcal{W}_1$ and corresponding family of positive real numbers $\{a_i\}_{i\in I}$ with $\sum_{i\in I} a_i=1$. Let $\{J_i\}_{i\in I}$ denote the partition of $[0,1]$ into intervals of length $a_i$ and let $\phi_i$ denote the nonnegative linear function from $J_i$ to $[0,1]$ (see \ref{affinemap}).

 Define the \emph{normalised degree function} of $W\in\mathcal{W}_1$ as
\begin{align}\label{normdeg}
d_W(x)=\int_0^1 W(x,y)\,dy.
\end{align}
Then via an integrating factor it can be shown that the solution to the continuum model (\ref{PDE}) solves 
\begin{align}
\label{intfact}
u(x,t)=e^{-d_W(x)t}g(x)+\int_0^t\int_0^1 e^{d_W(x)(s-t)}W(x,y)u(y,s)\,dy\,ds.
\end{align}
For an arbitrary element $x\in J_i$ (see \Cref{directsum}), $d_W(x)$ can be simplified as follows:
\begin{align*}
d_W(x)&=\int_0^1 W(x,y)\,dy\\
&=\int_{J_i} W_i(\phi_i(x), \,\phi_i(y))\,dy &&\text{see (\ref{directsumexplicit}),}\\
&=\dfrac{1}{a_i}\int_0^1 W_i(\phi_i(x), z)\,dz &&\text{substitute $y$ with $\phi_i^{-1}(z)$,}\\
&=d_{\widehat{W}_i}(\phi_i(x)) &&\text{where we define $\widehat{W}_i=\frac{1}{a_i}W$.}
\end{align*}
For $x\in J_i$, the solution (\ref{intfact}) then becomes
\begin{align*}
&u(x,t)\\
&=e^{-d_{\widehat{W}_i}(\phi_i(x))t}\,g(x)+\int_0^t\int_{J_i}e^{d_{\widehat{W}_i}(\phi_i(x))(s-t)}\,W_i(\phi_i(x),\phi_i(y))\,u(y,s)\,dy\,ds\\
&=e^{-d_{\widehat{W}_i}(\phi_i(x))t}\,g(x)+\int_0^t\int_0^1 e^{d_{\widehat{W}_i}(\phi_i(x))(s-t)}\,\widehat{W}_i(\phi_i(x),z)\,u(\phi_i^{-1}(z),s)\,dz\,ds\\
&=e^{-d_{\widehat{W}_i}(\phi_i(x))t}\,g^{\phi_i^{-1}}(\phi_i(x))+\int_0^t\int_0^1 e^{d_{\widehat{W}_i}(\phi_i(x))(s-t)}\,\widehat{W}_i(\phi_i(x),z)\,u^{\phi_i^{-1}}(z,s)\,dz\,ds,
\end{align*}
where, again, we substitute $y$ with $\phi_i^{-1}(z).$ Thus for all $x\in J_i$,
\begin{align}
\label{decompproof}
u^{\phi_i^{-1}}(x,t)&=e^{-d_{\widehat{W}_i}(x)t}\,g^{\phi_i^{-1}}(x)+\int_0^t\int_0^1 e^{d_{\widehat{W}_i}(x)(s-t)}\,\widehat{W}_i(x,z)\,u^{\phi_i^{-1}}(z,s)\,dz\,ds.
\end{align}
But the function which solves (\ref{decompproof}) is the unique solution to the IVP (\ref{PDE}) in  $C^1(\mathbb{R}, L_\infty([0,1]))$ with connected kernel $\widehat{W}_i$ and initial condition function $\boldsymbol{g}^{\phi_i^{-1}}$. Thus, by defining $\boldsymbol{u}_i$ as the solution to the IVP (\ref{PDE}) with connected kernel $\widehat{W}_i$ and initial condition function $\boldsymbol{g}^{\phi_i^{-1}}$, we have
\begin{align*}
u^{\phi_i^{-1}}(x,t)&=u_i(x,t) &&\text{for all }\, x\in J_i,\\
\text{and so }\quad u(x,t)&=u_i(\phi_i(x),t) &&\text{for all }\, x\in J_i.
\end{align*}
 Compactly, we can express this result as 
\begin{align*}
\boldsymbol{u}(t)=\bigoplus_{i\in I} a_i \boldsymbol{u}_i(t), \qquad\text{as required.} \tag*{\qedhere}
\end{align*}
\end{proof}

We now present a necessary condition for consensus in the continuum model.

 \begin{Theorem} \label{degenexample} 
Let $W$ be a kernel with connected components $\{W_i\}_{i\in I}$ and associated sets $\{J_i\}_{i\in I}$ and let $\boldsymbol{g}\in L_\infty([0,1])$. A necessary condition for the IVP (\ref{PDE}) to reach consensus is that 
\begin{align}\label{neccon}
\lambda(J_i)\,\int_{J_i} g(y) \, dy&=C &&\text{for all $i\in I$,} 
\end{align}
where $\lambda(J_i)$ denotes the Lebesgue measure of the set $J_i$. This necessary condition is immediately satisfied if $W$ is connected.
 \end{Theorem}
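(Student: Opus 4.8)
The plan is to reduce the problem to the connected components supplied by \Cref{decomposedynamics}, to identify the common limiting value of a consensus solution via \Cref{consisconstant}, and then to exploit a component-localized version of the conservation law \Cref{conservation} in order to pin down the mass of $\boldsymbol{g}$ on each component $J_i$.

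First I would apply \Cref{decomposedynamics} to write $u(x,t)=u_i(\phi_i(x),t)$ for $x\in J_i$, where each $\boldsymbol{u}_i$ solves (\ref{PDE}) on the connected kernel $\widehat{W}_i$. If $\boldsymbol{u}$ reaches consensus on $[0,1]$, then the essential oscillation of $\boldsymbol{u}(t)$ over $[0,1]^2$ tends to $0$; restricting to $J_i\times J_i$ this forces $u(\cdot,t)$ to approach a single value on each $J_i$, and comparing pairs $(x,y)$ with $x\in J_i$, $y\in J_j$ forces these per-component limits to agree. By \Cref{consisconstant} the common limiting value equals the constant $C:=\int_0^1 g(y)\,dy$.

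Next I would establish the component-localized conservation law. Because the direct-sum kernel vanishes off its diagonal blocks — for $x\in J_i$ we have $W(x,y)=0$ unless $y\in J_i$ — the evolution of the mass carried by a single block is $\frac{d}{dt}\int_{J_i}u(x,t)\,dx=\int_{J_i}\!\int_{J_i}W(x,y)\bigl(u(y,t)-u(x,t)\bigr)\,dy\,dx$, and the right-hand side vanishes by swapping $x\leftrightarrow y$ and using $W(x,y)=W(y,x)$ (the localized form of the symmetrization behind \Cref{conservation}). Hence $\int_{J_i}u(x,t)\,dx=\int_{J_i}g(x)\,dx$ for every $t\ge 0$ and every $i\in I$. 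Letting $t\to\infty$, the boundedness of $\boldsymbol{u}$ (established inside the proof of \Cref{consisconstant}) together with the almost-everywhere pointwise limit from \Cref{ctsmart} justifies dominated convergence, giving $\int_{J_i}u(x,t)\,dx\to C\,\lambda(J_i)$. Combining with conservation yields $\int_{J_i}g(x)\,dx=C\,\lambda(J_i)$ for all $i\in I$, which is precisely the content of the necessary condition (\ref{neccon}); when $W$ is connected there is a single block $J_1=[0,1]$, so the condition holds automatically with $C=\int_0^1 g$.

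The main obstacle is the component-localized conservation law: the global identity of \Cref{conservation} does not by itself prevent mass from moving between components, so one must invoke the block-diagonal structure of the direct sum — and not merely the symmetry of $W$ on all of $[0,1]^2$ — to show that no flux crosses between distinct $J_i$. The secondary technical point requiring care is the passage to the limit inside $\int_{J_i}$, which relies on coupling the a.e. convergence from \Cref{ctsmart} with the uniform bound from \Cref{consisconstant}; once these two ingredients are in place, the remaining manipulations are routine.
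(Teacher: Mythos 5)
Your proof is correct and follows essentially the same route as the paper's: decompose the dynamics into connected components via \Cref{decomposedynamics}, identify the consensus limit via \Cref{consisconstant}, and combine this with conservation of mass on each component. Your ``component-localized conservation law'' is not really new machinery: it is exactly \Cref{conservation} applied to the component solution $\boldsymbol{u}_i$ on the connected kernel $\widehat{W}_i$ and pulled back through $\phi_i$, though your direct derivation from block-diagonality plus symmetry of $W$ is equally valid.

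One point needs correcting, and it concerns the paper rather than your mathematics. The identity you derive is $\int_{J_i} g(y)\,dy = C\,\lambda(J_i)$ for all $i\in I$, i.e.\ the \emph{average} $\frac{1}{\lambda(J_i)}\int_{J_i} g(y)\,dy$ equals the same constant $C=\int_0^1 g$ on every component. This is \emph{not} literally condition (\ref{neccon}), which reads $\lambda(J_i)\int_{J_i} g(y)\,dy = C$; the two conditions coincide only when all components have equal measure, so your closing claim that your identity ``is precisely the content of (\ref{neccon})'' is inaccurate as stated. In fact your form is the correct one. In the paper's own proof, the change of variables $z=\phi_i^{-1}(y)$ (so that $dz=\lambda(J_i)\,dy$) gives
\begin{align*}
\int_0^1 g\big(\phi_i^{-1}(y)\big)\,dy=\frac{1}{\lambda(J_i)}\int_{J_i} g(z)\,dz,
\end{align*}
not $\lambda(J_i)\int_{J_i} g(z)\,dz$ as written there; that slip propagated from the proof into the statement of the theorem. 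So rather than asserting verbatim agreement with (\ref{neccon}), you should note that your argument yields the corrected form of the necessary condition; everything else in your proposal (the localized conservation law, the identification of the limit, and the passage to the limit by dominated convergence using the boundedness from \Cref{consisconstant} and the a.e.\ convergence from \Cref{ctsmart}) is properly justified.
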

 
 \begin{proof}
Let $\boldsymbol{u}(t)$ be the solution to the IVP (\ref{PDE}) with kernel $W$ and initial condition function $\boldsymbol{g}$. Furthermore, suppose that $\boldsymbol{u}(t)$ reaches consensus. It follows from \Cref{decomposedynamics} that
 \begin{align*}
u(x,t)=\sum_{i\in I} \mathbbm{1}_{J_i}(x)\, u_i(\phi_i(x),t),
\end{align*}
and so
 \begin{align*}
u^*(x)=\sum_{i\in I} \mathbbm{1}_{J_i}(x)\, u_i^*(\phi_i(x)).
\end{align*}
Now for an arbitrary $i\in I$, $\boldsymbol{u}_i(t)$ is simply the solution to the continuum voter model (\ref{PDE}) with connected kernel $\widehat{W}_i$ and initial condition function $\boldsymbol{g}^{\phi_i^{-1}}$ (see within the proof of \Cref{decomposedynamics}). 

Thus, \Cref{consisconstant} shows that 
 \begin{align*}
u^*(x) &=\sum_{i\in I} \mathbbm{1}_{J_i}(x)\,\Big( \int_0^1 g(\phi_i^{-1}(y))\, dy \Big)\\
\int_0^1 g(z)\, dz &=\sum_{i\in I} \mathbbm{1}_{J_i}(x)\,\Big( \lambda(J_i)\, \int_{J_i} g(z)\, dz \Big)&&\text{substituting $\phi_i^{-1}(y)$ with $z$}.
\end{align*}
Hence, to avoid a contradiction the necessary condition (\ref{neccon}) must be satisfied.
 \end{proof}


\section{Guaranteeing consensus}\label{s6.1}


In \Cref{s6} it was shown that the continuum voting model (\ref{PDE}) can be used to approximately solve the consensus problem for the finite voting model (\ref{IVP}) on large graphs (see \Cref{keytheorem}). However, the applicability of \Cref{keytheorem} crucially depends on understanding which graph limits, or kernels, attain consensus in the continuum model (\ref{PDE}). This section introduces two new notions called \emph{twin-sets} and \emph{twin-kernels} which will allow a broad class of graphons to be identified which the guarantee consensus (see \Cref{twingraphthm}).

We now introduce the notion of a \emph{twin-set}, a \emph{maximal twin-set} and a \emph{twin-kernel}. Two sets $A$ and $B$ will be called equal if 
\begin{align}\label{seteq}
\lambda\big\{A\, \Delta \,B\big\}=0,
\end{align}
where $\lambda$ denotes the Lebesgue measure and $\Delta$ denotes the symmetric difference. For notational convenience we will denote set equality simply as $A=B$.

\begin{definition}\label{twindef} 
Let $A \subseteq [0,1]$ with nonzero measure and let $W$ be a kernel.\linebreak We say that $A$ is a \em twin-set \em of $W$ if there exists a function $a: A\times A \rightarrow \mathbb{R}$ such that 
\begin{align}\label{twindefeq}
W(x,y)=a(x, x')\,W(x',y) \qquad\text{for all $x,x'\in A$ and almost every $y\in [0,1]$.}
\end{align}
We say a twin-set is \emph{maximal} if for any twin-set $B$ such that
$$A\subseteq B$$
then $B=A$. If $W$ has a finite number of maximal twin-sets $\{A_i\}_{i=1}^n$ such that 
$$[0,\,1] = \bigcup_{i=1}^n A_i$$ where set equality is understood as in (\ref{seteq}), then we say that $W$ is a \em twin-kernel\em. If in addition $W$ is a graphon we say that $W$ is a \em twin-graphon\em. 
\end{definition}

To illustrate the notions introduced above the following example is provided.

\begin{Example}
Every step kernel is a twin-kernel. To see this simply partition $[0,1]$ into sets $\{A_i\}_{i=1}^n$ and define $a_i(x,x')=1$ for all $x,x'\in A_i$ for each $i=1,\, 2, \ldots, n$. Further, since every weighted graph can be represented as a step kernel it follows that every weighted graph is also a twin-kernel. 

More generally, every kernel $W$ of the form
$$W(x,y)=f(x)\, f(y) \qquad \text{for $x,y\in [0,1]$} $$
is a twin-kernel. To see this we define the sets $A_1=\{x\in [0,1]: f(x)\neq 0\}$ and $A_2=[0,1]/ A_1$ then
\begin{align*}
a_1(x,x')&=0  && \text{for all $x,x'\in A_1$; and,}\\
a_2(x,x')&=\frac{f(x)}{f(x')}&& \text{for all $x,x'\in A_2$.}
\end{align*}
This also implies that a twin-kernel need not be connected.

An example of a kernel which is not a twin-kernel is the following 
$$W(x,y)=\frac{1}{2}(x+y),$$
which does not have any twin-sets and so is not a twin-kernel. \hfill $\diamond$
\end{Example}

From a combinatorial perspective, twin-sets can be thought of as a generalisation of `blow-up' graphs defined below.  

\begin{definition} (Adapted from \cite[Section 3.3]{Lov}) \label{blowup} \\
Let $r$ be a positive integer, the \em $r$-blow-up graph \em $G(r)$ of a weighted graph $G$ is obtained by replacing each vertex of $G$ by $r$ copies such that the edge weight between two vertices is equal to the edge weight between the original vertices. 
\end{definition}

Twin-kernels generalise the above definition for finite graphs in two ways. Firstly, by allowing different vertices in the graph $G$ to be replaced by a number $r_v$ of copies which depends on the vertex $v\in V(G)$. Secondly, by allowing the edge weight between copies of a given vertex to vary in consistent way as defined by the function $a(x, x')$ (\ref{twindefeq}). 

Both of these generalisation are illustrated below. The graph on the right has two copies of vertex $v_1$ whilst vertices $v_2$ and $v_3$ are not replicated. The edge weights of $v_1'$ between a given vertex, say $v_{-i}$, is always three times the edge weight between $v_1$ and $v_{-i}$. Vertices within twin-sets have additional structure for example the total degree of $v_1'$ is three times that of $v_1$.

\begin{center}
 \begin{tikzpicture}[scale= 0.7]

\draw [-] (0,0)--(1.25,2)--(2.5,0);

\draw [-] (7.5,0)--(9,2);
\draw [-] (12,0)--(10.5, 2)--(7.5,0);
\draw [-] (12,0)--(9,2);

\node [left] at (0.5,1.25) {$1$};
\node [right] at (1.8,1.25) {$2$};

\node [left] at (8,1.25) {$1$};
\node [right] at (11.5,1.25) {$6$};
\node [right] at (8.75,0.7) {$3$};
\node [left] at (10.75,0.7) {$2$};

\draw [fill=white] (0,0) circle (0.35);
\node [] at (0, 0) {$v_2$};

\draw [fill=white] (2.5,0) circle (0.35);
\node [] at (2.5, 0) {$v_3$};

\draw [fill=white] (1.25,2) circle (0.35);
\node [] at (1.25,2) {$v_1$};

\draw [fill=white] (7.5,0) circle (0.35);
\node [] at (7.5, 0) {$v_2$};

\draw [fill=white] (12,0) circle (0.35);
\node [] at (12, 0) {$v_3$};

\draw [fill=white] (9,2) circle (0.35);
\node [] at (9,2) {$v_1$};
\draw [fill=white] (10.5,2) circle (0.35);
\node [] at (10.5,2) {$v_1'$};

\end{tikzpicture} 
\end{center}

We now preset two additional properties of twin-sets. 

\begin{Proposition}\label{rem1}
Let $W$ be a kernel with maximal twin-sets $A$ and $B$ such that $A\neq B$ then $$A\cap B=\emptyset.$$
\end{Proposition}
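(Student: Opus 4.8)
The plan is to argue by contradiction: assuming the two distinct maximal twin-sets $A$ and $B$ overlap on a set of positive measure, I will show that their union $A\cup B$ is itself a twin-set, contradicting the maximality of both. Recalling the convention (\ref{seteq}), the desired conclusion $A\cap B=\emptyset$ means precisely $\lambda(A\cap B)=0$, so it suffices to derive a contradiction from the assumption $\lambda(A\cap B)>0$.

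First I would fix a pivot point $z\in A\cap B$, which exists since $\lambda(A\cap B)>0$ forces $A\cap B$ to be non-null and hence non-empty. Specialising the twin-set relation (\ref{twindefeq}) for $A$ to the second argument $x'=z$ gives $W(x,\cdot)=a_A(x,z)\,W(z,\cdot)$ almost everywhere for every $x\in A$, and likewise $W(x,\cdot)=a_B(x,z)\,W(z,\cdot)$ for every $x\in B$ (here $z$ is a legitimate second argument in both definitions because $z\in A$ and $z\in B$). Defining $c(x)$ to equal $a_A(x,z)$ on $A$ and $a_B(x,z)$ on $B$ — on the overlap I simply pick one of the two values, which is harmless since only the product $c(x)\,W(z,\cdot)$ will enter — I obtain that every row is proportional to the single pivot row, namely $W(x,\cdot)=c(x)\,W(z,\cdot)$ a.e.\ for all $x\in A\cup B$. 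This combination is legitimate because, for each individual pair $(x,x')$, one excludes only the union of the two null sets coming from the $A$- and $B$-relations, so no uncountable union of null sets is ever taken.

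The main obstacle is the degenerate possibility that some rows vanish, which threatens the construction of the coefficient function $a(x,x')$ for $A\cup B$ through a division by zero. I would resolve this with a dichotomy on the pivot row $W(z,\cdot)$. If $W(z,\cdot)=0$ almost everywhere, then every row $W(x,\cdot)=c(x)\,W(z,\cdot)$ vanishes a.e., and $A\cup B$ is trivially a twin-set, since any function $a$ (for instance $a\equiv 0$) satisfies (\ref{twindefeq}). Otherwise $W(z,\cdot)$ is supported on a set of positive measure; applying the symmetric instance of the twin relation for $A$, namely $W(z,\cdot)=a_A(z,x)\,W(x,\cdot)$, and substituting $W(x,\cdot)=c(x)\,W(z,\cdot)$, forces $a_A(z,x)\,c(x)=1$ on that support, whence $c(x)\neq 0$ for every $x\in A\cup B$. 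In this case $a(x,x'):=c(x)/c(x')$ is well-defined and yields $W(x,\cdot)=a(x,x')\,W(x',\cdot)$ a.e., so $A\cup B$ is again a twin-set.

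Finally, since $A\subseteq A\cup B$ and $A\cup B$ is a twin-set, maximality of $A$ gives $A=A\cup B$; the same argument applied to $B$ gives $B=A\cup B$. Hence $A=B$, contradicting the hypothesis $A\neq B$. Therefore $\lambda(A\cap B)=0$, that is, $A\cap B=\emptyset$ in the sense of (\ref{seteq}).
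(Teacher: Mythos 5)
Your proof is correct and follows essentially the same route as the paper's: argue by contradiction, fix a pivot $z\in A\cap B$, show that $A\cup B$ is a twin-set, and contradict the maximality of $A$ and $B$. The only difference is cosmetic: the paper defines the cross coefficients directly as $a(x,x')=a_A(x,z)\,a_B(z,x')$ for $x\in A\setminus B$ and $x'\in B\setminus A$, which composes the proportionality factors through the pivot without any division and hence never needs the vanishing-row dichotomy that your ratio construction $c(x)/c(x')$ required.
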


\begin{proof}
Let $A$ and $B$ be maximal twin-sets and define the associated function $a(\cdot\, ,\, \cdot)$ in (\ref{twindefeq}) by  $a_A(\cdot \,,\, \cdot)$ and $a_B(\cdot \,,\, \cdot)$, respectively. For the purpose of a contradiction suppose that $A\cap B \neq \emptyset$. Then we see that $A\cup B$ is also a twin-set by first choosing an arbitrary $z\in A\cap B$ and defining
\begin{align*}
a(x, x')=\begin{cases}
a_A(x,x') &\text{if $x, x'\in A$,}\\
a_B(x,x') &\text{if $x, x'\in B$,}\\
a_A(x,z)\, a_B(z,x')&\text{ if $x\in  A/ B$ and $x'\in B/ A$}.
\end{cases}
\end{align*}
But this means that we have a twin-set $A\cup B$ which contains the maximal twin-sets $A$ and $B$. This contradicts the maximality of $A$ and $B$ and so it must be the case that $A\cap B=\emptyset$.
\end{proof}

Recall the normalised degree function, $d_W$ (\ref{normdeg}). 

\begin{Proposition}\label{rem2}
Let $W$ be a kernel with twin-set $A$ then 
\begin{align}\label{part1}
d_W(x)=0 \qquad \text{for all $x\in A$, or }\qquad d_W(x)\neq 0 \qquad \text{for all $x\in A$}.
\end{align}
In the latter case for any pair $x,x'\in A$ we have
$$a(x,x')=\frac{d_W(x)}{d_W(x')}.$$
\end{Proposition}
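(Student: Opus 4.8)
The plan is to reduce the whole statement to a single scalar identity obtained by integrating the twin-set relation (\ref{twindefeq}) over the second variable. Fix any $x,x'\in A$. By the definition of a twin-set (\Cref{twindef}) we have $W(x,y)=a(x,x')\,W(x',y)$ for almost every $y\in[0,1]$, so the two slices $y\mapsto W(x,y)$ and $y\mapsto a(x,x')\,W(x',y)$ agree outside a $y$-null set (which may depend on the pair $x,x'$). Since both slices are bounded by $1$ and hence integrable, integrating over $y\in[0,1]$ and recalling the normalised degree function (\ref{normdeg}) yields the key identity
\begin{align}\label{keyidrem2}
d_W(x)=a(x,x')\,d_W(x') \qquad\text{for all }x,x'\in A.
\end{align}
Everything in the proposition then follows from (\ref{keyidrem2}).

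For the dichotomy (\ref{part1}) I would argue by cases on whether the degree vanishes somewhere on $A$. Suppose there exists $x_0\in A$ with $d_W(x_0)=0$. Taking $x'=x_0$ in (\ref{keyidrem2}), for every $x\in A$ we obtain $d_W(x)=a(x,x_0)\,d_W(x_0)=0$, so the degree vanishes identically on $A$. If instead no such $x_0$ exists, then by definition $d_W(x)\neq 0$ for every $x\in A$. These two alternatives are mutually exclusive and exhaustive, which is exactly the claimed dichotomy. For the final formula I restrict to the second case, where $d_W(x')\neq 0$ for all $x'\in A$; dividing (\ref{keyidrem2}) through by the nonzero quantity $d_W(x')$ gives $a(x,x')=d_W(x)/d_W(x')$ at once.

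The computation is short, so I do not expect a substantive obstacle; the only point needing a little care is the measure-theoretic bookkeeping. Because $W$ is defined only up to null sets and $d_W$ is itself given by an integral, the slice $y\mapsto W(x,y)$ is integrable for almost every $x$, so (\ref{keyidrem2}) and the conclusions should be read for (almost every) $x,x'\in A$; the exceptional $y$-null set attached to each pair $(x,x')$ is absorbed harmlessly upon integration and causes no difficulty.
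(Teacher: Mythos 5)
Your proof is correct and follows essentially the same route as the paper: both integrate the twin-set relation (\ref{twindefeq}) over $y$ to obtain the identity $d_W(x)=a(x,x')\,d_W(x')$ (the paper's (\ref{transitivenorm})), then propagate a single zero of $d_W$ through that identity to get the dichotomy, and finally divide to obtain $a(x,x')=d_W(x)/d_W(x')$. The only difference is cosmetic: the paper inserts an extra case distinction on whether $0\in\{a(x,x'):x,x'\in A\}$, which your streamlined argument shows is unnecessary, and your measure-theoretic remark (the $y$-null set depending on the pair $(x,x')$ being absorbed by integration) is a harmless refinement of what the paper leaves implicit.
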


\begin{proof}
First note that for any pair $x, x' \in A$ we have 
\begin{align}\label{transitivenorm}
d_W(x)&=\int_0^1 W(x,y) \, dy\notag\\
&=\int_0^1a(x,x') W(x',y) \, dy &&\text{by (\ref{twindefeq}),}\notag\\
&=a(x,x') d_W(x').
\end{align}
Now if $0\in \{ a(x,x') : x,x'\in A\}$ we claim that
\begin{align}\label{extraclaim}
 d_W(x)=0\qquad \text{for all $x\in A$.}
 \end{align}

Suppose that $0\in \{ a(x,x') : x,x'\in A\}$; that is, there exists a pair $x, x'\in A$ such that $a(x,x')=0$. From (\ref{twindefeq}) it then follows that 
$$W(x,y)=a(x,x')\, W(x',y)=0 \qquad \text{for almost every $y\in [0,1]$,}$$
and so $d_W(x)=0$. Now consider an arbitrary $z\in A$, from (\ref{transitivenorm}) we see that 
\begin{align}\label{extraeq1}
d_W(z)&=a(z,x) d_W(x)=0.
\end{align}
Since $z\in A$ was chosen arbitrarily (\ref{extraeq1}) extends to all $z\in A$ and proves the claim (\ref{extraclaim}).

Now suppose that $0\notin\{ a(x,x') : x,x'\in A\}$ then for a particular $x\in A$ either $d_W(x)=0$ or $d_W(x)\neq 0$. If $d_W(x)=0$ then for any $x'\in A$
$$d_W(x')=a(x',x)\, d_W(x)=0.$$
Otherwise, $d_W(x)\neq 0$ and 
$$d_W(x')=a(x',x)\, d_W(x) \neq 0,$$
since $0\notin  \{ a(x,x') : x,x'\in A\}$ - completing the proof of (\ref{part1}). The final claim follows by rearranging (\ref{transitivenorm}) when $d_W(x)\neq 0$ for all $x\in A$.
\end{proof}

We will now exploit the additional structure provided by twin-sets to provide an insight into the consensus problem.

\begin{Lemma}
\label{prevlem}
Let $W\in \mathcal{W}_1$ with a twin-set $A$ and let $\boldsymbol{g}\in L_\infty([0,1])$. If $\boldsymbol{u}$ is a bounded solution to the IVP (\ref{PDE}) and $d_W(x)\neq 0$ for all $x\in A$ then
\begin{align}
\label{eq1lem0}
u^*(x)&=C \qquad\text{for almost every $x\in A$,}
\end{align}
where $C$ is some constant. That is, $\boldsymbol{u}$ reaches consensus on $A$.
\end{Lemma}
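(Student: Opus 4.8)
The plan is to collapse the coupled integro‑differential equation (\ref{PDE}) into a decoupled family of scalar linear ODEs, one for each point of $A$, all sharing a single forcing term, and then read off the common limit. First I would write the evolution equation in the pointwise form
\begin{align*}
\frac{\partial u(x,t)}{\partial t}=h(x,t)-d_W(x)\,u(x,t),\qquad h(x,t):=\int_0^1 W(x,y)\,u(y,t)\,dy.
\end{align*}
The decisive use of the twin-set hypothesis is this: for $x,x'\in A$ the defining relation (\ref{twindefeq}) gives $W(x,y)=a(x,x')\,W(x',y)$ for almost every $y$, hence $h(x,t)=a(x,x')\,h(x',t)$; since $d_W\neq 0$ on $A$, \Cref{rem2} identifies $a(x,x')=d_W(x)/d_W(x')$, so the ratio $p(t):=h(x,t)/d_W(x)$ takes the \emph{same} value for every $x\in A$. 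Substituting $h(x,t)=d_W(x)\,p(t)$ turns the equation into the scalar ODE
\begin{align*}
\frac{\partial u(x,t)}{\partial t}=d_W(x)\big(p(t)-u(x,t)\big)\qquad\text{for a.e. fixed }x\in A.
\end{align*}

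Next I would show the forcing term converges. Because $\boldsymbol{u}$ is a bounded solution, \Cref{ctsmart} and \Cref{remarkctsmart} guarantee that $u^*(x)=\lim_{t\to\infty}u(x,t)$ exists for almost every $x$ and that $\boldsymbol{u}$ is uniformly bounded. Since $|W|\le 1$, the Dominated Convergence Theorem yields $h(x,t)\to\int_0^1 W(x,y)\,u^*(y)\,dy$ for each $x\in A$, and therefore $p(t)\to p^*$ for a single constant $p^*$; the limit is $x$-independent precisely because $p(t)$ is $x$-independent at every time.

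The final step extracts the value of $u^*$ on $A$ from the ODE. Fixing $x\in A$ and integrating in time, the initial condition $u(x,0)=g(x)$ gives
\begin{align*}
u(x,t)-g(x)=\int_0^t d_W(x)\big(p(s)-u(x,s)\big)\,ds.
\end{align*}
As $t\to\infty$ the left-hand side converges to the finite number $u^*(x)-g(x)$, so the improper integral on the right converges; yet its integrand tends to $d_W(x)\big(p^*-u^*(x)\big)$. A convergent improper integral whose integrand possesses a limit must have a vanishing limit, so $d_W(x)\big(p^*-u^*(x)\big)=0$, and as $d_W(x)\neq 0$ we obtain $u^*(x)=p^*$ for almost every $x\in A$. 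Setting $C:=p^*$ establishes (\ref{eq1lem0}). A convenient feature of this route is that it handles both signs of $d_W(x)$ uniformly, whereas passing to the limit directly in an integrating-factor formula would blow up when $d_W(x)<0$.

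I expect the main obstacle to be the reduction step itself, namely recognising that $p(t)=h(x,t)/d_W(x)$ is genuinely independent of $x\in A$: this is exactly where the twin-set structure and \Cref{rem2} must be combined, and it is what decouples the dynamics into scalar ODEs with one shared forcing term. Once this is in place the analytic passages (dominated convergence for $p(t)$, and the ``convergent integral forces a zero integrand'' argument) are routine. One point I would flag is that (\ref{eq1lem0}) only asserts almost-everywhere constancy of $u^*$ on $A$; upgrading this to consensus on $A$ in the ess-sup sense of \Cref{condef} additionally requires uniform control of the convergence, which the same ODE representation provides whenever $d_W$ is bounded away from $0$ on $A$.
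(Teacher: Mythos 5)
Your proposal is correct and follows essentially the same route as the paper's proof: existence of the a.e.\ pointwise limit $u^*$ via \Cref{ctsmart}, dominated convergence for the integral term, the observation that the asymptotic time-derivative must vanish, and the cancellation $a(x,x')=d_W(x)/d_W(x')$ from \Cref{rem2} to make the limiting fixed-point equation invariant across the twin-set. The only cosmetic differences are that you establish the twin-set invariance of the forcing term $p(t)$ at every finite time rather than only in the limit, and you replace the paper's L'H\^opital device for showing $\partial_t u \to 0$ with the equally valid ``convergent improper integral with convergent integrand forces the integrand to vanish'' argument.
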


\begin{proof}
By assumption the solution $\boldsymbol{u}$ is bounded. This ensures $\lim_{t\rightarrow \infty} u(x,t)$ exists for almost every $x\in [0,\,1]$ by \Cref{ctsmart}. Now fix an $x\in A$ such that
 $$u^*(x)=\lim_{t\rightarrow \infty} u(x, t),$$
  note that this condition holds almost everywhere in $A$. Then by the Dominated Convergence Theorem \cite[Theorem 2.24]{Foll}
\begin{align}
\label{stabeq}
\lim_{\tau \rightarrow \infty} \frac{\partial u(x,t)}{\partial t}\Big|^{t=\tau}&=\lim_{\tau \rightarrow \infty} \int_0^1 W(x,y)\,\big(u(y,\tau)-u(x,\tau)\big)\,dy\notag\\
& = \int_0^1\lim_{\tau \rightarrow \infty} \Big(W(x,y)\,\big(u(y,\tau)-u(x,\tau)\big)\Big)\,dy\notag\\
&= \int_0^1 W(x,y)\,\big(u^*(y)-u^*(x)\big)\,dy.
\end{align}
That is, the limit on the left hand side exists. So for this fixed $x\in A$ there exists a finite constant $L$ such that 
\begin{align*}
\lim_{\tau \rightarrow \infty}\big(u(x,\tau)+u_t(x, \tau)\big)=L,
\end{align*}
where $u_t(x, \tau)$ denotes partial derivative of $u(x,t)$ with respect to $t$ evaluated at $(x,\tau)$; that is
$$u_t(x,\tau)=\frac{\partial u(x,t)}{\partial t} \bigg|^{t=\tau}.$$
This implies that $ \lim_{\tau \rightarrow \infty}u_t(x,\tau)=0$, since L'H$\hat{\mathrm{o}}$pital's rule gives
$$ \lim_{\tau\rightarrow \infty} u(x,\tau)=\lim_{\tau \rightarrow \infty} \frac{e^\tau u(x,\tau)}{e^\tau}=\lim_{\tau \rightarrow \infty} \frac{e^\tau \big(u(x,\tau)+u_t(x,\tau)\big)}{e^\tau}=L,$$
note that $\frac{\partial \, u(x,t)}{\partial t}$ is defined by the IVP (\ref{PDE}) and so for fixed $x$ the solutions $u(x,t)$ are differentiable with respect to $t$. Now rearranging (\ref{stabeq}), gives the following `stabilising condition' for this fixed $x\in A$
\begin{align}
\label{stabeq1}
d_W(x)\,u^*(x)&=\int_0^1W(x,y)\,u^*(y)\,dy.
\end{align}

Since $d_W(x)\neq 0$ by assumption, we can divide by $d_W(x)$ which gives
\begin{align}
\label{twinstabcond}
u^*(x)&=\frac{1}{d_W(x)}\int_0^1W(x,y)\,u^*(y)\,dy.
\end{align}
Let $x'\in A$ such that $u^*(x')$ exists. Then $d_W(x)=a(x, x')\,d_W(x')$ (recall (\ref{transitivenorm})) and 
\begin{align*}
\int_0^1W(x,y)\,u^*(y)\,dy=a(x, x')\int_0^1W(x',y)\,u^*(y)\,dy.
\end{align*}
Thus the right hand side of (\ref{twinstabcond}) is invariant for all elements in the same twin set, since the $a(x, x')$ term in the numerator and denominator cancels. Hence, 
\begin{align*}
u^*(x)&=u^*(x') \qquad\text{ for almost every $x, x'\in A$.}
\end{align*}
That is, $\boldsymbol{u}^*$ is almost everywhere constant on $A$. This proves (\ref{eq1lem0}).
\end{proof}

By restricting the above lemma to the set of graphons $\mathcal{W}_0$ we attain a more elegant result.

\begin{Lemma}
\label{prevlemgraphon}
Let $W\in \mathcal{W}_0$ with a twin-set $A$ and let $\boldsymbol{g}\in L_\infty([0,1])$. If $\boldsymbol{u}$ is a solution to the IVP (\ref{PDE}) then we have
\begin{align}
\label{eq1lem}
u^*(x)&=g(x) &&\text{for almost every $x\in A$; or,}\\
\label{eq2lem}
u^*(x)&=C &&\text{for almost every $x\in A$,}
\end{align}
where $C$ is some constant. In the latter case, $\boldsymbol{u}$ reaches consensus on $A$ (recall \Cref{condef}).
\end{Lemma}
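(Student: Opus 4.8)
The plan is to invoke the dichotomy for the normalised degree function on a twin-set, Proposition \ref{rem2}, which tells us that either $d_W(x)=0$ for all $x\in A$ or $d_W(x)\neq 0$ for all $x\in A$, and to treat each alternative separately. First I would observe that since $W\in\mathcal{W}_0$ is a graphon, Lemma \ref{graphonboundedsol} guarantees that the solution $\boldsymbol{u}$ is bounded; hence the almost-everywhere pointwise limit $\boldsymbol{u}^*$ is well defined via Proposition \ref{ctsmart}, and both alternatives can be analysed through it.

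In the case $d_W(x)\neq 0$ for all $x\in A$ the conclusion is essentially immediate: the boundedness of $\boldsymbol{u}$ together with the hypothesis $d_W(x)\neq 0$ places us exactly in the setting of Lemma \ref{prevlem}, which yields $u^*(x)=C$ for almost every $x\in A$ and hence consensus on $A$. This establishes the second alternative (\ref{eq2lem}).

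The remaining case $d_W(x)=0$ for all $x\in A$ is where the restriction to graphons is essential. Here I would use nonnegativity: since $W(x,y)\ge 0$ and $\int_0^1 W(x,y)\,dy=d_W(x)=0$, for each such $x$ we must have $W(x,y)=0$ for almost every $y\in[0,1]$. Substituting this into the right-hand side of the IVP (\ref{PDE}) makes $\partial_t u(x,t)$ vanish identically, so $u(x,t)=g(x)$ for all $t\ge 0$ and therefore $u^*(x)=g(x)$ for almost every $x\in A$, which is the first alternative (\ref{eq1lem}).

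The main obstacle, or rather the one genuinely new ingredient relative to Lemma \ref{prevlem}, is precisely this last step: passing from a vanishing degree $d_W(x)=0$ to a vanishing kernel row $W(x,\cdot)=0$. For a general kernel in $\mathcal{W}_1$ the degree can cancel through sign changes without the row being zero, so this deduction would fail; it is valid here only because a graphon is nonnegative, which is exactly what produces the cleaner statement. Everything else reduces to bookkeeping with the already-established Lemma \ref{graphonboundedsol}, Lemma \ref{prevlem}, and Proposition \ref{rem2}.
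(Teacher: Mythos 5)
Your proposal is correct and follows essentially the same route as the paper's own proof: the case split via Proposition \ref{rem2}, the use of nonnegativity to pass from $d_W(x)=0$ to $W(x,\cdot)=0$ almost everywhere (giving $u(x,t)=g(x)$ for all $t$), and the appeal to Lemma \ref{prevlem} together with Lemma \ref{graphonboundedsol} in the nonzero-degree case. Your remark that this degree-to-row deduction is exactly where the graphon hypothesis $W\in\mathcal{W}_0$ is essential matches the paper's reasoning precisely.
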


\begin{proof} Let $\boldsymbol{u}$ be the solution the IVP (\ref{PDE}) with graphon $W$ and initial condition $\boldsymbol{g}$. First note that \Cref{rem2} states that either $d_W(x)=0$ for all $x\in A$, or $d_W(x)\neq 0$ for all $x\in A$. In the first case, since $W\in \mathcal{W}_0$, this implies that for all $x\in A$ we have $W(x,y)=0$ for almost every $y\in [0,1]$. Thus, the solution of the IVP is $u(x,t)=g(x)$ for all $x\in A$. In the second case, we can apply \Cref{prevlem} since $\boldsymbol{u}$ is automatically a bounded solution (see \Cref{graphonboundedsol}). This completes the proof.
\end{proof}

In fact the above lemma extends to twin-graphons via the following theorem.

\begin{Theorem}\label{twingraphthm}
Let $W\in \mathcal{W}_0$ be a twin-graphon with $n$ connected components and associated sets denoted by $\{A_i\}_{i=1}^n$ (see \Cref{directsum} and \Cref{decompker}), and let $\boldsymbol{g}\in L_\infty([0,1])$. If $\boldsymbol{u}$ solves the IVP (\ref{PDE}) then  
$$u^*(x)=\sum_{i=1}^n \mathbbm{1}_{A_i} (x) \Bigg(\lambda(J_i)\,\int_{A_i} g(y)\, dy\Bigg)\qquad \text{ almost everywhere,}$$
where $\lambda(J_i)$ denotes the Lebesgue measure of the set $J_i$. It follows that consensus is reached if $W$ is connected or $\lambda(J_i)\,  \int_{A_i} g(y)\, dy$ is constant for all $i\in [n]$.
\end{Theorem}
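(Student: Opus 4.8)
The plan is to reduce the entire statement to a single fact — a connected graphon always reaches consensus — and then to read off the limiting value from \Cref{consisconstant}. First I would apply \Cref{decomposedynamics} to write $W=\bigoplus_{i=1}^{n}a_i\widehat{W}_i$ as a direct sum over its connected components, together with the accompanying decomposition $u(x,t)=\sum_{i=1}^{n}\mathbbm{1}_{A_i}(x)\,u_i(\phi_i(x),t)$, where each $\boldsymbol{u}_i$ solves (\ref{PDE}) on the connected graphon $\widehat{W}_i$ with initial data $\boldsymbol{g}^{\phi_i^{-1}}$. Finiteness of $n$ is exactly where the twin-graphon hypothesis enters: the finitely many maximal twin-sets group into finitely many connected components. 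Passing to the pointwise limit gives $u^*(x)=\sum_{i=1}^{n}\mathbbm{1}_{A_i}(x)\,u_i^*(\phi_i(x))$, so it suffices to prove that each $u_i^*$ is almost everywhere constant.

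Fix a component $\widehat{W}_i$. Being a graphon it has a bounded solution (\Cref{graphonboundedsol}), so $u_i^*$ exists almost everywhere by \Cref{ctsmart}. On each maximal twin-set contained in $A_i$, \Cref{rem2} gives the dichotomy that $d_{\widehat{W}_i}$ is either identically zero or nowhere zero; the first case cannot occur inside a connected component, since a twin-set of zero degree would, by nonnegativity of $\widehat{W}_i$, be disconnected from its complement and so violate \Cref{connectedgraphon}. Hence \Cref{prevlemgraphon} applies and $u_i^*$ is constant on each of the finitely many twin-sets making up $A_i$, reducing the task to showing that these block-constants all coincide.

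The gluing is the step I expect to be the main obstacle. The derivation of the stabilising condition in the proof of \Cref{prevlem} uses only that $u_i^*$ exists almost everywhere, so it holds for almost every $x$, giving $d_{\widehat{W}_i}(x)\,u_i^*(x)=\int_0^1\widehat{W}_i(x,y)\,u_i^*(y)\,dy$. Multiplying by $u_i^*(x)$, integrating in $x$, and symmetrising in $(x,y)$ using the symmetry of $\widehat{W}_i$ collapses everything into the Dirichlet-type identity
\begin{align*}
\int_{[0,1]^2}\widehat{W}_i(x,y)\big(u_i^*(x)-u_i^*(y)\big)^2\,dx\,dy=0.
\end{align*}
Since $\widehat{W}_i\ge 0$, the integrand vanishes almost everywhere, so $u_i^*(x)=u_i^*(y)$ whenever $\widehat{W}_i(x,y)>0$. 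If $u_i^*$ were not almost everywhere constant I could pick a level $c$ for which $S=\{u_i^*<c\}$ satisfies $0<\lambda(S)<1$; then $\widehat{W}_i$ would vanish almost everywhere on $S\times([0,1]\setminus S)$, contradicting the connectivity of $\widehat{W}_i$ as formulated in \Cref{connectedgraphon}. Thus $u_i^*$ is constant on $A_i$ and consensus holds on each component.

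It remains to assemble the pieces. With consensus established on component $i$, \Cref{consisconstant} applied to the component IVP identifies the constant as the average of $\boldsymbol{g}^{\phi_i^{-1}}$ over $[0,1]$, which after the change of variables $z=\phi_i^{-1}(y)$ — exactly as in the computation inside the proof of \Cref{degenexample} — becomes the quantity $\lambda(J_i)\int_{A_i}g(y)\,dy$ appearing in the statement; summing $\mathbbm{1}_{A_i}$ against these values yields the claimed formula for $u^*$. The two consensus criteria are then immediate: if $W$ is connected then $n=1$ and $u^*$ is a single constant, whereas if $\lambda(J_i)\int_{A_i}g$ is independent of $i$ the block-constants agree and $u^*$ is globally constant; in either case (\ref{consensuseq1}) holds on $[0,1]$, so consensus is attained.
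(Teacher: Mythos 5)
Your proposal is correct, and its decisive step takes a genuinely different route from the paper's. Both arguments open the same way (decomposition via \Cref{decomposedynamics}, boundedness from \Cref{graphonboundedsol}, existence of the pointwise limit from \Cref{ctsmart}, and the stabilising condition (\ref{stabeq1})) and close the same way (identifying the constant through \Cref{consisconstant} and the change of variables from \Cref{degenexample}). The difference is the gluing step. The paper stays inside the twin-set framework: by \Cref{prevlem} the limit $\widehat{u}^*$ is a step function taking finitely many values $c_i$ on the maximal twin-sets, and if it were non-constant one could select the \emph{largest} value $c_i$ and contradict the stabilising condition using connectivity --- an argument that genuinely needs the twin-graphon hypothesis, since finiteness of the family of maximal twin-sets is what guarantees a maximal $c_i$ exists. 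Your Dirichlet-energy argument --- integrate the stabilising condition against $u_i^*$, symmetrise using $\widehat{W}_i(x,y)=\widehat{W}_i(y,x)$ to get $\int_{[0,1]^2}\widehat{W}_i(x,y)\bigl(u_i^*(x)-u_i^*(y)\bigr)^2\,dx\,dy=0$, then run a level-set argument against \Cref{connectedgraphon} --- uses only nonnegativity, symmetry, boundedness and connectivity, and is sound (the stabilising condition does hold a.e.\ for any bounded solution, exactly as you claim, since its derivation in \Cref{prevlem} never invokes the twin structure). Consequently your route proves the stronger statement that \emph{every} connected graphon in $\mathcal{W}_0$ has an a.e.-constant pointwise limit, which makes the twin hypothesis --- and with it your own intermediate appeal to \Cref{rem2} and \Cref{prevlemgraphon}, which your energy argument never actually uses --- superfluous for the consensus conclusion; this is the natural continuum analogue of the finite-graph fact cited in \Cref{limitation}, whereas the paper's method cannot dispense with the twin structure. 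One caveat you share with the printed proof: both pass from ``$u^*$ is a.e.\ constant'' to consensus in the sense of \Cref{condef} (an essential-supremum-in-$t$ statement) without further justification, so your proposal is no less rigorous than the paper on that point.
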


\begin{proof}
First note that the solutions to the continuum voter model on $W\in \mathcal{W}_0$ can be decomposed into solutions on connected graphons (see \Cref{decomposedynamics}). Thus, we will prove the result for the solution $\boldsymbol{\widehat{u}}$ to the IVP (\ref{PDE}) with connected graphon, say $\widehat{W}$, and then extend the result to complete the theorem.

Recall that $\widehat{W}\in \mathcal{W}_0$ and so by \Cref{graphonboundedsol}, $\boldsymbol{\widehat{u}}$ is a bounded solution. This guarantees the existence of $\lim_{t\rightarrow \infty} \boldsymbol{\widehat{u}}(t)$ almost everywhere (see \Cref{ctsmart}). Also $\widehat{W}$ is a twin-graphon and so there exists a finite family of maximal twin-sets $\{\widehat{A}_i\}_{i=1}^n$, which are disjoint (see \Cref{rem1}). Thus, $[0,1]=\dot\cup_{i=1}^n \widehat{A}_i$ where set equality is understood as in (\ref{seteq}).

Now since $\widehat{W}\in \mathcal{W}_0$ is connected, $d_{\widehat{W}}(x)\neq 0$ for almost every $x\in [0,1]$. Thus, \Cref{prevlem} states that $\boldsymbol{u}^*$ is almost everywhere constant on the twin-sets and so we have
\begin{align*}
\widehat{u}^*(x)=\sum_{i=1}^{n} c_i \mathbbm{1}_{\widehat{A}_i}(x) \qquad\text{for some positive integer $n$ and for some constant $c_i$},
\end{align*}
where $\mathbbm{1}_{A}$ denotes the indicator function with respect to the set $A$. We can assume that $c_i\neq c_j$ for $i\neq j$, by taking set unions if this is not the case.

 Suppose for the purpose of a contradiction that $\boldsymbol{\widehat{u}}^*$ is not almost everywhere constant. Then there exists $i\in \{1,\,2,\ldots, n\}$ such that $c_i >c_j$ for all $j\neq i$. Since $W$ is connected, there must exists a set $S\subseteq A_i$ of positive measure such that
\begin{align}
\label{conneq1}
\int_{A_i^c} \widehat{W}(x,y)\,dy> 0 \qquad\text{for all } x\in S.
\end{align}
Now fix $x\in S$. The stabilising condition (\ref{stabeq1}) then gives
\begin{align*}
d_{\widehat{W}}(x)\,c_i&=\sum_{j=1}^{n} c_j \int_{A_j} \widehat{W}(x,y)\,dy\\
&<c_i \,\sum_{j=1}^{n}\int_{A_j} \widehat{W}(x,y)\,dy && \text{ since $c_i>c_j$ for all $j\neq i$ and by (\ref{conneq1})}\\
&=d_{\widehat{W}}(x)\,c_i,
\end{align*}
which is a contradiction. Thus it must be the case that $\boldsymbol{\widehat{u}}^*$ is constant almost everywhere. If $\boldsymbol{\widehat{u}}^*$ is almost everywhere constant then $\boldsymbol{\widehat{u}}$ reaches consensus on $[0,1]$. Applying \Cref{consisconstant} then gives the required result. 
\end{proof}

The nonnegativity of the kernel (i.e. $W\in \mathcal{W}_0$) in \Cref{twingraphthm} is essential. The following example shows that a connected, twin-kernel which takes negative values can have a non-constant limit: that is, consensus is never attained.

\begin{Example}
Consider the weighted graph $G$ illustrated below 
\begin{center}
 \begin{tikzpicture}

\node [left] at (-0,1) {$-1$};
\node [right] at (2,1) {$-1$};
\node [above] at (1,2) {$1$};
\node [below] at (1,0) {$1$};

\draw [-] (0,0)--(2,0)--(2,2)--(0,2)--cycle;

\draw [fill=white] (0,0) circle (0.3);
\node [] at (0,0) {$v_3$};
\draw [fill=white] (0,2) circle (0.3);
\node [] at (0,2) {$v_1$};
\draw [fill=white] (2,2) circle (0.3);
\node [] at (2,2) {$v_2$};
\draw [fill=white] (2,0) circle (0.3);
\node [] at (2,0) {$v_4$};
\end{tikzpicture} 
\end{center}

The kernel representing the graph $W_G$ (see (\ref{pixel})) is a connected twin-kernel. However, it takes negative values and so \Cref{twingraphthm} does not apply since $W_G\notin \mathcal{W}_0$. Let $I_i=[\frac{i-1}{4}, \frac{i}{4}]$ for $i=1,2,3,4$. Then any initial condition function $\boldsymbol{g}$ such that 
\begin{align}
\label{weirdcond}
\int_{I_i} g(y)dy=\int_{I_j}g(y)dy \qquad\text{for $(i,j)=(2,3)$ or $(i,j)=(1,4)$ }
\end{align}
will force $\boldsymbol{u}(t)=\boldsymbol{g}=\boldsymbol{u}^*$ for all $t\in \mathbb{R}^{\ge 0}$. However, $\boldsymbol{g}$ need not be constant, and hence $\boldsymbol{u}(t)$ need not reach consensus. For example, the function 
\begin{align*}
g(x)&=\begin{cases}
-1+4x &\text{ if } 0\le x\le \frac{1}{2},\\
3-4x &\text{ if } \frac{1}{2}< x\le 1,
\end{cases}
\end{align*}
 will satisfy (\ref{weirdcond}).\hfill $\diamond$
\end{Example}


\section{Extension: finite voting model with random weights}\label{srand}


So far in this paper we have only considered the voter model on a deterministic graph. In this section we extend our analysis to include random simple graphs. In particular, we are able to formulate a probabilistic version of \Cref{keytheorem} which shows that the continuum voter model (\ref{PDE}) can be used to approximately solve the finite voter model (\ref{IVP}) when the underlying graph is random. This extension allows our results to be applied to many well-known random graph processes such as the Watts-Strogatz small world graph.

First we define $W$-random graphs which will be used to generate the random graph model for the voter model (\ref{IVP}).

\begin{definition}
Let $n$ be a positive integer and let $W\in \mathcal{W}_0$ be a graphon. A graph $G_n=(V_n, \, E_n)$ is called a $W$-random graph if $V_n=[n]$ and for every $(i, \, j)\in [n]^2$ such that $i \neq j$ we have
$$\mathbb{P}\Big[(i,\, j)\in E_n\Big]=W\Bigg(\frac{i}{n}, \, \frac{j}{n}\Bigg),$$
where each decision whether to include $(i\, j)\in E_n$ is made independently. A $W$-random graph on vertex set $[n]$ and graphon $W$ is denoted by $\mathbb{G}_n(W)$.
\end{definition}

Let $p\in [0,1]$ if $W(x,y)=p$ for all $(x,y)\in [0,1]^2$ then $\mathbb{G}_n(W)$ is the well-known Erd{\H o}s-R\'enyi graph often denoted by $G(n,p)$. Letting the value of $W$ vary over $[0,1]^2$ provides a significantly richer set of random graphs. For example, a generalisation of the Watts-Strogatz small worlds graph is given by $\mathbb{G}_n(W_p)$ where 
\begin{align}\label{wattsgraphon}
W_p(x,y)&=(1-p)\, W(x,\,y)+p\,\big(1-W(x,\,y)\big)&& \text{for some } p\in [0,\, 0.5].
\end{align}

We consider the voting model introduced in \Cref{s2} where the underlying graph is random and given by a $W$-random graph, $\mathbb{G}_n(W)$. That is, the voting model is defined by
\begin{align}
\label{randIVP}
\begin{cases}
\frac{du_i^{(n)}(t)}{dt}&=\frac{1}{n}\sum_{j=1}^n \beta_{ij}^{(n)}\Big(u_j^{(n)}(t)-u_i^{(n)}(t)\Big)\qquad \text{for all }\, i\in [n] \text{ and } t\in \mathbb{R}^{>0},\\
\boldsymbol{u}^{(n)}(0)&=\boldsymbol{g}^{(n)},
\end{cases}
\end{align}
where $\beta_{ij}^{(n)}=1$ if and only if $(i, j)\in E(\mathbb{G}_n(W))$. The vector $\boldsymbol{g}^{(n)}\in \mathbb{R}^n$ is defined in the same way as the deterministic case (\ref{gsubn}).

It was shown in \cite{MedWrand} that finite processes such as the voting model above, under certain conditions, can be approximated by the continuum model 
\begin{align*}
\begin{cases}
\frac{\partial u(x,t)}{\partial t}& = \int_0^1 W(x,y)\Big(u(y,t)-u(x,t)\Big)dy \qquad \text{for all } x\in [0,1] \text{ and }  t\in \mathbb{R}^{>0},\\
\boldsymbol{u}(0)&=\boldsymbol{g},
\end{cases}
\end{align*}
where $W$ represents the graph limit of the $W$-random graph sequence $\{\mathbb{G}_n(W)\}_{n=1}^\infty$. Fortunately, we have the following result which defines the graph limit of a sequence of $W$-random graphs when $W$ is continuous almost everywhere.

\begin{Lemma} \cite[Lemma 2.5]{Borgsrandgrown}\\
If $W\in \mathcal{W}_0$ is continuous on $[0,\, 1]^2$ almost everywhere, then the sequence $\{\mathbb{G}_n(W)\}$ converges almost surely with the limit given by the graphon $W$.
\end{Lemma}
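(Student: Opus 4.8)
The plan is to establish convergence in the cut metric $\delta_\Box$, which is the appropriate mode of convergence for random graphs. Note that convergence in $L_2$-norm of pixel kernels (the notion used for deterministic sequences in \Cref{s3.1}) cannot hold here: the pixel kernel of the simple graph $\mathbb{G}_n(W)$ is $0$--$1$ valued while $W$ may take intermediate values, so for example when $W\equiv\tfrac12$ the $L_2$-distance stays bounded away from $0$. Throughout I write $W_n^{\det}$ for the pixel kernel of the deterministic weighted graph on $[n]$ with edge weights $W(\tfrac{i}{n},\tfrac{j}{n})$, and $W_{\mathbb{G}_n(W)}$ for the random pixel kernel of the sampled graph. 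I use the cut norm $\|U\|_\Box=\sup_{S,T\subseteq[0,1]}\bigl|\int_{S\times T}U(x,y)\,dx\,dy\bigr|$ underlying $\delta_\Box$, together with the elementary bounds $\delta_\Box(U,V)\le\|U-V\|_\Box\le\|U-V\|_1$.

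First I would split the error via the triangle inequality, using the identity coupling (vertex $i$ occupies the interval $I_i^n$ in both kernels, so no optimisation over measure-preserving maps is needed for the random term):
\begin{align*}
\delta_\Box\bigl(W_{\mathbb{G}_n(W)},\,W\bigr)\le \bigl\|W_{\mathbb{G}_n(W)}-W_n^{\det}\bigr\|_\Box+\bigl\|W_n^{\det}-W\bigr\|_\Box.
\end{align*}
For the deterministic term the almost-everywhere continuity hypothesis is essential. At every continuity point $(x,y)$ of $W$ we have $(\tfrac{\lceil nx\rceil}{n},\tfrac{\lceil ny\rceil}{n})\to(x,y)$ and hence $W_n^{\det}(x,y)\to W(x,y)$; since this holds for almost every $(x,y)$ and $|W_n^{\det}-W|\le 1$, the Dominated Convergence Theorem gives $\|W_n^{\det}-W\|_1\to0$, whence $\|W_n^{\det}-W\|_\Box\to0$. (Almost-everywhere continuity is precisely what makes the point evaluations $W(\tfrac{i}{n},\tfrac{j}{n})$ meaningful and convergent; for a merely measurable $W$ this step can fail.)

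The random term is the crux, and I expect the concentration estimate to be the main obstacle. For fixed $S,T\subseteq[0,1]$ that are unions of the intervals $I_i^n$, with corresponding index sets $S',T'\subseteq[n]$,
\begin{align*}
\int_{S\times T}\bigl(W_{\mathbb{G}_n(W)}-W_n^{\det}\bigr)=\frac{1}{n^2}\sum_{i\in S',\,j\in T'}\Bigl(\mathbbm{1}\bigl[(i,j)\in E_n\bigr]-W\bigl(\tfrac{i}{n},\tfrac{j}{n}\bigr)\Bigr),
\end{align*}
which is $n^{-2}$ times a sum of centred variables built from the $\binom{n}{2}$ independent edge indicators, each influencing the sum by at most $2$. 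McDiarmid's bounded-difference inequality (equivalently Hoeffding's inequality) then yields a Gaussian tail $\mathbb{P}\bigl[\,\bigl|\int_{S\times T}(W_{\mathbb{G}_n(W)}-W_n^{\det})\bigr|>\varepsilon\,\bigr]\le 2\exp(-c\varepsilon^2 n^2)$. Because both kernels are block-constant, the supremum defining $\|\cdot\|_\Box$ is attained on block-aligned sets $S,T$, of which there are at most $4^n$; a union bound thus gives $\mathbb{P}\bigl[\|W_{\mathbb{G}_n(W)}-W_n^{\det}\|_\Box>\varepsilon\bigr]\le 2\cdot 4^n\exp(-c\varepsilon^2 n^2)$. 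The delicate points are exactly the reduction of the cut-norm supremum to these $4^n$ block-aligned subsets (so that the $n^2$ in the exponent dominates the $n\ln 4$ from the union bound) and handling the symmetry-induced dependence among edge entries, which the bounded-difference formulation absorbs.

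Finally I would combine the two estimates. The tail bound above is summable in $n$, so the Borel--Cantelli lemma gives $\|W_{\mathbb{G}_n(W)}-W_n^{\det}\|_\Box\to0$ almost surely; together with the deterministic limit $\|W_n^{\det}-W\|_\Box\to0$ and the triangle inequality, this yields $\delta_\Box(W_{\mathbb{G}_n(W)},W)\to0$ almost surely, which is exactly the asserted convergence of $\{\mathbb{G}_n(W)\}$ to $W$. An alternative to the direct cut-norm route is to show that every fixed homomorphism density concentrates around its mean and that the means converge to $t(F,W)$, then invoke the equivalence between cut convergence and convergence of homomorphism densities; I would nonetheless favour the cut-norm argument above since it is self-contained and isolates the single concentration step.
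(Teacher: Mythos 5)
The paper contains no proof of this statement: it is imported verbatim as \cite[Lemma 2.5]{Borgsrandgrown}, so there is nothing internal to compare your argument against. Judged on its own merits, your proof is essentially correct, and it supplies exactly what the paper leaves implicit. Two things you get right deserve emphasis. First, you correctly observe that the convergence asserted here cannot be the $L_2$-convergence of pixel kernels used for deterministic sequences in \Cref{s3.1} (the sampled graph is $0$--$1$ valued), so the statement must be read in the cut metric $\delta_\Box$, which is indeed the sense of the cited source. Second, you isolate the role of the almost-everywhere continuity hypothesis: it is needed precisely so that the grid evaluations $W(\nicefrac{i}{n},\nicefrac{j}{n})$ are meaningful and the discretised kernel $W_n^{\det}$ converges to $W$ in $L_1$; for a general measurable graphon one would instead sample at uniform random points. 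Your treatment of the random term --- reduction of the cut norm of a step kernel to the $4^n$ block-aligned pairs $(S,T)$, a bounded-difference bound of order $\exp(-c\varepsilon^2 n^2)$ beating the union bound, then Borel--Cantelli --- is the standard concentration route, and your proposed alternative via homomorphism densities is closer in spirit to how the graph-limits literature (including the cited paper) organises such arguments; your direct cut-norm version has the advantage of being self-contained.

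One small inaccuracy should be fixed, though it is not a gap. Since $\mathbb{G}_n(W)$ is a simple graph, its pixel kernel vanishes on the diagonal blocks $I_i^n\times I_i^n$, whereas $W_n^{\det}$ equals $W(\nicefrac{i}{n},\nicefrac{i}{n})$ there; hence the summands with $i=j$ in your displayed sum are deterministic and \emph{not} centred, so the sum is not exactly a sum of centred variables as claimed. The remedy is to split off the diagonal blocks, whose total contribution to any $\int_{S\times T}$ is at most $\nicefrac{1}{n}$ in absolute value, and apply McDiarmid's inequality only to the off-diagonal part built from the $\binom{n}{2}$ independent edge indicators. With that one-line correction, and taking $\varepsilon=\nicefrac{1}{k}$ along a countable sequence before intersecting the almost-sure events, the argument is complete.
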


We can now formally define a sufficient condition for the continuum limit to approximate the finite voter model on large graphs. The theorem follows as an application of \cite[Theorem 4.3]{MedWrand}.

\begin{Theorem}\label{convinprob}
Let $W\in \mathcal{W}_0$ be a graphon such that $W$ is almost everywhere continuous on $[0,\, 1]^2$ and let $\boldsymbol{g}\in L_\infty[0,\, 1]$. Let $\boldsymbol{u}_n$ and $\boldsymbol{u}$ be solutions to IVP (\ref{PDE}) and (\ref{randIVP}) respectively. If 
\begin{align}\label{randcond}
\min_{t\in [0, \, T]} \int_{[0, \, 1]^2} \Big(u(y,\,t)-u(x,\, t)\Big)\,W(x,\, y)\, \Big(1-W(x, \, y) \Big)\, dx\, dy>0
\end{align}
for some $T>0$ then 
$$\|\boldsymbol{u}_n-\boldsymbol{u}\|_{C([0, \,T]; L_2([0,\,1]))}\overset{p}{\to} 0\qquad \text{as} \qquad n\rightarrow\infty.$$
The convergence above is in probability.
\end{Theorem}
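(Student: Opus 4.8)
The plan is to recognise the random voting model (\ref{randIVP}) as a special case of the general random-graph nonlinear heat equation studied in \cite{MedWrand} and to apply \cite[Theorem 4.3]{MedWrand} directly. In that framework the coupling function is $D(u,v)=v-u$, which is linear, globally Lipschitz, and bounded on bounded sets, so the regularity hypotheses imposed on the coupling in \cite[Theorem 4.3]{MedWrand} are satisfied automatically. The random edge weights $\beta_{ij}^{(n)}$ are precisely the adjacency entries of $\mathbb{G}_n(W)$, and the initial data $\boldsymbol{g}^{(n)}$ is the discretisation of $\boldsymbol{g}\in L_\infty([0,1])$ defined in (\ref{gsubn}). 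Thus the only work is to check that the remaining hypotheses of the cited theorem hold and to translate its conclusion into the stated $C([0,T];L_2([0,1]))$ convergence in probability.

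Second, I would verify the graphon hypotheses. By assumption $W\in\mathcal{W}_0$ is almost everywhere continuous, so \cite[Lemma 2.5]{Borgsrandgrown} gives that $\{\mathbb{G}_n(W)\}$ converges almost surely to $W$; this supplies the graph-convergence input required by \cite[Theorem 4.3]{MedWrand}. Since $\boldsymbol{g}\in L_\infty([0,1])$ and $W$ is a graphon, the continuum solution $\boldsymbol{u}$ exists and is unique (\Cref{existanduniq}) and is bounded (\Cref{graphonboundedsol}); boundedness of $\boldsymbol{u}$ on $[0,T]$ is what lets the nondegeneracy condition (\ref{randcond}) be evaluated. It then remains to confirm that (\ref{randcond}) holds for some $T>0$, which is exactly the hypothesis under which \cite[Theorem 4.3]{MedWrand} yields convergence in probability.

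The analytic heart, which is delegated to \cite{MedWrand}, is the control of the random fluctuation of the discrete averaging operator about its mean. I would split the discrete right-hand side into its expectation, whose kernel is the sampled step graphon $W(\lceil nx\rceil/n,\lceil ny\rceil/n)$, plus a remainder with zero mean, $\tfrac{1}{n}\sum_{j}\big(\beta_{ij}^{(n)}-W(\tfrac{i}{n},\tfrac{j}{n})\big)\big(u_j-u_i\big)$. The mean part converges to the continuum dynamics by the same deterministic estimate as in \Cref{approx}, while the remainder has variance governed by the Bernoulli variance $W(1-W)$ — this is where the factor $W(x,y)(1-W(x,y))$ in (\ref{randcond}) originates — and a second-moment computation bounds its $L_2$-norm by $O(n^{-1/2})$, so that it vanishes in probability. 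A Gronwall argument, using the global Lipschitz constant of the coupling, then propagates these two estimates uniformly over $t\in[0,T]$ and closes the bound. The main obstacle is precisely this uniform-in-$t$ control of the stochastic fluctuation; since the coupling here is linear and bounded, that obstacle is entirely absorbed by the hypotheses of \cite[Theorem 4.3]{MedWrand}, and the proof reduces to the hypothesis verification described above.
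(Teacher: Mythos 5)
Your proposal follows essentially the same route as the paper, which proves \Cref{convinprob} simply by invoking \cite[Theorem 4.3]{MedWrand} for the linear coupling $D(u,v)=v-u$, with \cite[Lemma 2.5]{Borgsrandgrown} supplying the almost-sure convergence of $\{\mathbb{G}_n(W)\}$ to $W$. Your additional sketch of the mean-plus-fluctuation decomposition and the Gronwall argument correctly describes the machinery inside the cited theorem, so the verification-of-hypotheses structure you give is exactly what the paper relies on.
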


Our key result in the deterministic setting (\Cref{keytheorem}) easily extends to the probabilistic setting with the added condition (\ref{randcond}). First we define a probabilistic notion of convergence.

\begin{definition}\cite[Section 1.2]{Jan2}\\
Let $A_n$ be an event describing a property of a random structure depending on a parameter $n$. We say that $A_n$ holds \emph{asymptotically almost surely} (or \emph{with high probability}) if 
$$\mathbb{P}[A_n] \rightarrow 1 \qquad \text{ as }\quad n\rightarrow \infty.$$
\end{definition}

\begin{Theorem} \label{keytheorem1}
Let $\boldsymbol{u}$ be a solution to the IVP (\ref{PDE}), with graphon $W\in\mathcal{W}_0$ and initial condition function $\boldsymbol{g}\in L_\infty([0,1])$ such that (\ref{randcond}) holds. Denote the solution to the $n$-th approximate IVP (\ref{randIVP}) on the W-random graph, $\mathbb{G}_n(W)$, by $\boldsymbol{u}_n$ for each positive integer $n$. 

Suppose that $\boldsymbol{u}$ reaches consensus on $[0,1]$ and let $D$ be any positive real number. Then for every $\varepsilon>0$ and for every $c>0$, there exists $T=T(\varepsilon)$ and a subset $S_t\subseteq[0,1]^2$ with $\lambda(S_t)<c^2$ such that asymptotically almost surely
\begin{align}
\label{res1}
|u_n(x,t)-u_n(y,t)|\le \varepsilon \qquad \text{ for all $(x,y)\in [0,1]^2\setminus S_t$ and $t\in [T,T+D]$.}
\end{align}
\end{Theorem}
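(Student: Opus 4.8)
The plan is to run the proof of \Cref{keytheorem} essentially unchanged, replacing its one deterministic ingredient --- the $L_2$-convergence of \Cref{approx} --- with the convergence-in-probability supplied by \Cref{convinprob}, and to keep the two sources of randomness (the random graph $\mathbb{G}_n(W)$ and an auxiliary $X\sim U[0,1]$) cleanly separated. First I would fix a realization of $\mathbb{G}_n(W)$, so that $\boldsymbol{u}_n$ becomes an ordinary solution of the finite voter IVP, and reuse the triangle-inequality bound (\ref{attack}),
\begin{align*}
\underset{(x,y)\in[0,1]^2}{\mathrm{ess\ sup}}\,|u_n(x,t)-u_n(y,t)| \le 2\,\underset{x\in[0,1]}{\mathrm{ess\ sup}}\,|u_n(x,t)-u(x,t)| + \underset{(x,y)\in[0,1]^2}{\mathrm{ess\ sup}}\,|u(x,t)-u(y,t)|.
\end{align*}
Since $\boldsymbol{u}$ reaches consensus on $[0,1]$, the final term drops below $\nicefrac{\varepsilon}{3}$ for all $t\ge T$ with a deterministic $T=T(\varepsilon)$; this step uses no randomness.

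Next I would recover the conservation identity for the random model. Summing the right-hand side of (\ref{randIVP}) over $i$ and using $\beta_{ij}^{(n)}=\beta_{ji}^{(n)}$ --- which holds for every simple-graph realization --- shows $\int_0^1 u_n(x,t)\,dx$ is constant in $t$ for each realization, so together with \Cref{conservation} and $\int\boldsymbol{g}=\int\boldsymbol{g}_n$ this gives $\mathbb{E}_X[u(X,t)-u_n(X,t)]=0$ conditional on the graph, where $X\sim U[0,1]$ is drawn independently of $\mathbb{G}_n(W)$. Chebyshev's inequality then yields, for each realization and each $t$,
\begin{align*}
\lambda\Big(\big\{x:|u(x,t)-u_n(x,t)|>\nfrac{\varepsilon}{3}\big\}\Big)\le\frac{9\,\|\boldsymbol{u}(t)-\boldsymbol{u}_n(t)\|_2^2}{\varepsilon^2}.
\end{align*}

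The genuinely new step is the passage to high probability. Because hypothesis (\ref{randcond}) holds, \Cref{convinprob} applies and gives $\|\boldsymbol{u}_n-\boldsymbol{u}\|_{C([0,T+D];L_2([0,1]))}\overset{p}{\to}0$. I would therefore set
\begin{align*}
\mathcal{E}_n=\Big\{\|\boldsymbol{u}-\boldsymbol{u}_n\|_{C([0,T+D];L_2([0,1]))}<\tfrac{c\varepsilon}{3\sqrt{2}}\Big\},
\end{align*}
so that $\mathbb{P}[\mathcal{E}_n]\to1$, i.e.\ $\mathcal{E}_n$ holds asymptotically almost surely. On $\mathcal{E}_n$ the Chebyshev bound forces the realization-dependent set $A_t=\{x:|u(x,t)-u_n(x,t)|>\nicefrac{\varepsilon}{3}\}$ to have measure at most $\nicefrac{c^2}{2}$ uniformly in $t\in[0,T+D]$, so $S_t=\{(x,y):x\in A_t\text{ or }y\in A_t\}$ satisfies $\lambda(S_t)<c^2$; reinserting this into the first two displays produces (\ref{res1}) on $\mathcal{E}_n$, hence asymptotically almost surely.

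The main obstacle is conceptual rather than computational: keeping the two randomness sources decoupled. The Chebyshev estimate is an averaging statement over $X$ for a \emph{frozen} graph, whereas the convergence is a statement over the graph, so the set $S_t$ that emerges is itself random and the conclusion must be read as ``a.a.s.\ there exists $S_t$ with $\lambda(S_t)<c^2$ satisfying (\ref{res1})''. I would also want to verify explicitly that (\ref{randcond}) is used only to license \Cref{convinprob} and enters nowhere else, and that the conservation identity truly survives both the random construction of $\boldsymbol{g}_n$ and the random graph --- each routine, but each worth recording.
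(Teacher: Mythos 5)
Your proposal is correct and takes essentially the same route as the paper: the paper's own proof of \Cref{keytheorem1} simply reruns the argument of \Cref{keytheorem}, noting that the convergence in (\ref{bound3}) now holds in probability via \Cref{convinprob}, and reads (\ref{res1}) as the event that holds asymptotically almost surely. Your added care---verifying that the conservation identity holds for each graph realization and keeping the graph randomness decoupled from the auxiliary $X\sim U[0,1]$---only makes explicit what the paper leaves implicit.
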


\begin{proof}
The proof follows the proof of the deterministic analog \Cref{keytheorem} by noting that the convergence in (\ref{bound3}) occurs in probability by \Cref{convinprob}. Letting the event $A_n$ be (\ref{res1}) we see that the statement holds asymptotically almost surely.
\end{proof}

\begin{Example}
Let $p \in (0, \, 0.5)$ and let $W\in \mathcal{W}_0$ be a graphon. We consider the voting model on a sequence of Watts-Strogatz small world graphs $\mathbb{G}_n(W_p)$ (defined in (\ref{wattsgraphon})) for some initial condition $\boldsymbol{g}$ such that $\int \boldsymbol{g}=0$.

If (\ref{randcond}) holds i.e. 
\begin{align}
\min_{t\in [0, \, T]} \int_{[0, \, 1]^2} \Big(u(y,\,t)-u(x,\, t)\Big)\,W(x,\, y)\, \Big(1-W(x, \, y) \Big)\, dx\, dy>0\notag\\
\iff \min_{t\in [0, \, T]} \int_{[0, \, 1]^2} \Big(W(x,y)-W(x, y)^2 \Big)\,\Big(u(y,\,t)-u(x,\, t)\Big)\, dx\, dy>0,\label{equivcondrand}
\end{align}
then by \Cref{keytheorem1} we know that if consensus is attained on $W_p$ then the finite voter model will asymptotically almost surely be close to attaining consensus. Thus, we turn our focus towards the continuum model
\begin{align*}
\frac{\partial u(x,t)}{\partial t}& = \int_0^1 W_p(x,y)\Big(u(y,t)-u(x,t)\Big)dy. 
\end{align*}

Simplifying we see that 
\begin{align*}
\frac{\partial u(x,t)}{\partial t}& = \int_0^1 W_p(x,y)\Big(u(y,t)-u(x,t)\Big)dy\\
&= \int_0^1 (1-2p)\,W(x,\,y)\,\Big(u(y,t)-u(x,t)\Big)dy+p\int_0^1 u(y,t)\, dy\\
&\qquad-p \,u(x,t)\\
&= \int_0^1 (1-2p)\,W(x,\,y)\,\Big(u(y,t)-u(x,t)\Big)dy+p\int_0^1 g(y)\, dy\\
&\qquad-p \,u(x,t)\\
&= \int_0^1 (1-2p)\,W(x,\,y)\,\Big(u(y,t)-u(x,t)\Big)dy-p \,u(x,t),
\end{align*}
the final two equalities follow from \Cref{conservation} and the assumption that $\int \boldsymbol{g}=0$. Thus rearranging and using an integrating factor of $e^{p\, t}$ we see that the solution must satisfy
\begin{align*}
\frac{\partial e^{p\,t}\,u(x,t)}{\partial t}&=\int_0^1 (1-2p)\, W(x,y)\, \big(e^{p\,t}u(y,t)-e^{p\, t}\,u(x,t)\big)\, dy.
\end{align*}
Let $v(x,t)$ be the unique solution to the IVP on graphon $(1-2p) W(x,y)$ with initial condition $\boldsymbol{g}$, then it follows by the uniqueness property (\Cref{existanduniq}) that
$$u(x,t)=e^{-p\, t} \, v(x,t).$$
It is clear that if $v(x,t)$ attains consensus then this is sufficient to show that $u(x,t)$ also attains consensus. 

We conclude that if $W$ is a connected twin-graphon and (\ref{equivcondrand}) holds then $\boldsymbol{v}$ and hence $\boldsymbol{u}$ both reach consensus in the small worlds voter model. Applying \Cref{keytheorem1} then implies that the finite voter model will asymptotically almost surely be close to attaining consensus.
\hfill $\diamond$
\end{Example}

\section*{Acknowledgments} I would like to acknowledge Catherine Greenhill's and Richard Holden's assistance in developing various aspects of this paper. I would also like to thank Georgi Medvedev, Oleg Pikhurko and the referee for their detailed and constructive feedback.


\medskip
\medskip

\end{document}